%%%%%%%%%%%%%%%%%%%%%%% file template.tex %%%%%%%%%%%%%%%%%%%%%%%%%
%
% This is a general template file for the LaTeX package SVJour3
% for Springer journals.          Springer Heidelberg 2010/09/16
%
% Copy it to a new file with a new name and use it as the basis
% for your article. Delete % signs as needed.
%
% This template includes a few options for different layouts and
% content for various journals. Please consult a previous issue of
% your journal as needed.
%
%%%%%%%%%%%%%%%%%%%%%%%%%%%%%%%%%%%%%%%%%%%%%%%%%%%%%%%%%%%%%%%%%%%
%
% First comes an example EPS file -- just ignore it and
% proceed on the \documentclass line
% your LaTeX will extract the file if required
% [arxiv_v2: filecontents example.eps stripped, 188 chars]
\RequirePackage{fix-cm}
\documentclass[smallextended]{svjour3}       % onecolumn (second format)
\smartqed  % flush right qed marks, e.g. at end of proof
\usepackage{graphicx}
\usepackage{amsmath,amsfonts,amssymb,mathtools} % AMS
\usepackage{algorithm}
\usepackage[noend]{algorithmic}
\usepackage{cite}
\usepackage{xcolor}
\usepackage{hyperref}
%\usepackage{latexsym}
% etc.

% please place your own definitions here and don't use \def but
% \def\e{\varepsilon}
\newcommand{\e}{\varepsilon}
\newcommand{\R}{{\mathbb R}}

\newcommand{\la}{\langle}
\newcommand{\ra}{\rangle}

\newtheorem{assumption}{Assumption}
% \newtheorem{proposition}{Proposition}
%
% Insert the name of "your journal" with
% \journalname{myjournal}
%

% \def\ag#1{{\color{black}#1}}

% \def\rinat#1{{\color{black}#1}}
% \def\po#1{{\color{black}#1}} %Petr's changes
% \def\pd#1{{\color{black}#1}} %Pavel's comments

 %Petr's changes
 %Pavel's comments

\begin{document}

\title{
Tensor methods for strongly convex strongly concave saddle point problems and strongly monotone variational inequalities
\thanks{The work of P.~Ostroukhov was fulfilled in Sirius (Sochi) in August 2020 and was supported by Andrei M. Raigorodskii Scholarship in Optimization.  The research  of  P.~Dvurechensky was partially supported by the Ministry of Science and Higher Education of the Russian Federation (Goszadaniye) 075-00337-20-03, project no. 0714-2020-0005. The research of A. Gasnikov was funded by Math+ AA4-2 Scholarship in Optimization.}
}
% \subtitle{Do you have a subtitle?\\ If so, write it here}

\titlerunning{Tensor methods for strongly convex strongly concave SPP and strongly MVI}        % if too long for running head

\author{Petr Ostroukhov $^1$ \and
        Rinat Kamalov $^2$  \and
        Pavel Dvurechensky $^3$ \and
        Alexander Gasnikov $^4$
}

%\authorrunning{Short form of author list} % if too long for running head

\institute{
$^1$ Moscow Institute of Physics and Technology, Dolgoprudny, Russia;
Institute for Information Transmission Problems RAS, Moscow, Russia
\email{ostroukhov@phystech.edu}
\and
$^2$ Moscow Institute of Physics and Technology, Dolgoprudny, Russia; V. A. Trapeznikov Institute of Control Sciences of Russian Academy of Sciences, Moscow, Russia
\email{kamalov.ra@phystech.edu}
\and
$^3$ Weierstrass Institute for Applied Analysis and Stochastics, Berlin; Institute for Information Transmission Problems RAS, Moscow \email{pavel.dvurechensky@wias-berlin.de}
\and
$^4$ Moscow Institute of Physics and Technology, Dolgoprudny, Russia; Weierstrass Institute for Applied Analysis and Stochastics, Berlin; Institute for Information Transmission Problems RAS, Moscow, Russia
\email{gasnikov@yandex.ru}
}

\date{Received: date / Accepted: date}
% The correct dates will be entered by the editor

\maketitle

\begin{abstract}
    In this paper we propose three $p$-th order tensor methods for $\mu$-strongly-convex-strongly-concave saddle point problems (SPP). The first method is based on the assumption of $p$-th order smoothness of the objective and it achieves a convergence rate of $O \left( \left( \frac{L_p R^{p - 1}}{\mu} \right)^\frac{2}{p + 1} \log \frac{\mu R^2}{\e_G} \right)$, where $R$ is an estimate of the initial distance to the solution, and $\e_G$ is the error in terms of duality gap. Under additional assumptions of first and second order smoothness of the objective we connect the first method with a locally superlinear converging algorithm and develop a second method with the complexity of $O \left( \left( \frac{L_p R^{p - 1}}{\mu} \right)^\frac{2}{p + 1}\log \frac{L_2 R \max \left\{ 1, \frac{L_1}{\mu} \right\}}{\mu} + \log \frac{\log \frac{L_1^3}{2 \mu^2 \e_G}}{\log \frac{L_1 L_2}{\mu^2}} \right)$. The third method is a modified version of the second method, and it solves gradient norm minimization SPP with $\tilde O \left( \left( \frac{L_p R^p}{\e_\nabla} \right)^\frac{2}{p + 1} \right)$ oracle calls, where $\e_\nabla$ is an error in terms of norm of the gradient of the objective.
    Since we treat SPP as a particular case of variational inequalities, we also propose three methods for strongly monotone variational inequalities with the same complexity as the described above.

\keywords{Variational inequality \and Saddle point problem \and High-order smoothness \and Tensor methods \and Gradient norm minimization}
% \PACS{PACS code1 \and PACS code2 \and more}
% \subclass{MSC code1 \and MSC code2 \and more}
\end{abstract}

\section{Introduction}

    In this work we focus on two types of saddle point problems (SPP). The first one is the classic minimax problem:
    \begin{equation}\label{eq:saddle problem}
        \min_{x \in \mathcal X} \max_{y \in \mathcal Y} g(x, y),
    \end{equation}
    where $g: \mathcal X \times \mathcal Y \to \R$ is a convex over $\mathcal X$ and concave over $\mathcal Y$, and the sets  $\mathcal X, \mathcal Y$ are convex. This is a particular case of a more general problem, called monotone variational inequality (MVI). In MVI we have a monotone operator $F: \mathcal Z \to \R^n$ over a convex set $\mathcal Z \subset \R^n$ and we need to find
    \begin{equation}\label{eq:mvi}
        z^* \in \mathcal Z:\ \forall z \in \mathcal Z, \la F(z), z^* - z\ra \le 0.
    \end{equation}
    If we set $\mathcal Z = \mathcal X \times \mathcal Y$ and $ F(z) = (\nabla_x g(x,y),-\nabla_y g(x,y))$, then MVI is equivalent to the min-max SPP \eqref{eq:saddle problem}.
    
    The second problem is gradient norm minimization of SPP:
    \begin{equation}\label{eq:saddle problem gradient norm}
        \min_{(x, y) \in \mathcal X \times \mathcal Y} \|\nabla g(x, y)\|_2.
    \end{equation}
    
    For both problems we consider unconstrained case with $\mathcal X = \R^n$ and $\mathcal Y = \R^m$. Additionally, we assume $g(x, y)$ is $\mu$-strongly convex in $x \in \R^n$ and $\mu$-strongly concave in $y \in \R^m$.
    
    There is a number of papers on numerical methods for SPP \eqref{eq:saddle problem} in convex-concave setting \cite{korpelevich1976extragradient, tseng1995linear, nemirovski2004prox, nesterov2007dual, tseng2008accelerated}. One of the most popular among first-order methods for this setting is the Mirror-Prox algorithm \cite{nemirovski2004prox}, which treats saddle-point problems via solving the corresponding MVI. According to \cite{nemirovsky1983problem}, this method achieves optimal complexity of $O(1/\e)$ iterations for first-order methods applied to smooth convex-concave SPP in large dimensions. 
    
    Additional assumption of strong convexity and strong concavity lead to better results. The algorithms from  \cite{rockafellar1976monotone, tseng1995linear, nesterov2006solving, gidel2018variational, mokhtari2020unified} achieve iteration complexity of $O(L/\mu \log (1 / \e))$. In \cite{lin2020near} the authors proposed an algorithm with complexity $O(L/\sqrt{\mu_x \mu_y} \log^3(1/\e))$, which matches up to a logarithmic factor the lower bound, obtained in \cite{zhang2019lower}. It worths to mention that $\log^3(1/\e)$ factor can be improved, namely, it is possible to achieve iteration complexity of $O(L/\sqrt{\mu_x \mu_y} \log(1/\e))$ (see \cite{dvinskikh2020accelerated}).
    
    The methods listed above use first-order oracles, and it is known from optimization that tensor methods, which use higher-order derivatives, have faster convergence rate, yet for the price of more expensive iteration.
    The idea of using derivatives of high order in optimization is not new (see \cite{hoffmann1978higher-order}). The most common type of high-order methods use second-order oracles, for example Newton method \cite{nocedal2006numerical,nesterov1994interior} and its modifications such as the cubic regularized Newton method \cite{nesterov2006cubic}. Recently the idea of exploiting oracles beyond the second order started to attract increased attention, especially in convex optimization \cite{bullins2018fast, bullins2019higher, gasnikov2019optimal, pmlr-v99-gasnikov19b, dvurechensky2019near}.
    
    However, much less is known on high-order methods for SPP and MVIs. In \cite{monteiro2012iteration} the authors propose a second-order method based on their Hybrid Proximal Extragradient framework \cite{monteiro2010complexity}. The resulting complexity is $O(1/\e^\frac{2}{3})$.
    %Until this year it was unknown whether we can get better results in this area. 
    A recent work \cite{bullins2020higher} shows how to modify Mirror-Prox method using oracles beyond second order and improves complexity to reach duality gap $\e$ to $O(1/\e^\frac{2}{p + 1})$ for convex-concave problems with $p$-th order Lipschitz derivatives. The paper \cite{huang2020cubic} proposes a cubic regularized Newton method for solving SPP, which has global linear and local superlinear convergence rate if $\nabla g(x, y)$ and $\nabla^2 g(x, y)$ are Lipschitz-continuous and $g(x, y)$ is strongly convex in $x$ and strongly concave in $y$.
    
    In our work we make a next step and propose a Tensor method for strongly monotone variational inequalities and, as a corollary, a Tensor method for saddle point problems with strongly-convex-strongly-concave objective. 
    Standing on the ideas from \cite{bullins2020higher} and \cite{huang2020cubic}, our work can be split into three parts. 
    
    Firstly, we apply restart technique \cite{stonyakin2018generalized} to the HighOrderMirrorProx Algorithm \ref{alg:highordermirrorprox} from \cite{bullins2020higher}, which is possible because of strong convexity and strong concavity of the objective. Such a modification improves the algorithm complexity to $O \left( \left( \frac{L_p R^{p - 1}}{\mu} \right)^\frac{2}{p + 1} \log \frac{\mu R^2}{\e_G} \right)$, where $R$ is an upper bound for the initial distance to the solution $\|(x_1, y_1) - (x^*, y^*)\|_2$ and $L_p$ is the Lipschitz constant of the $p$-th derivative, and $\e_G$ is the error in terms of duality gap. 
    
    Secondly, using an estimate of the area of local superlinear convergence, when the algorithm reaches this area, we switch to the Cubic-Regularized Newton Algorithm \ref{alg:crn-spp} from \cite{huang2020cubic} to obtain local superlinear convergence of our algorithm. 
    The total complexity of the final Algorithm \ref{alg:restarted highordermirrorprox with crn-spp} becomes \newline $O \left( \left( \frac{L_p R^{p - 1}}{\mu} \right)^\frac{2}{p + 1}\log \frac{L_2 R \max \left\{ 1, \frac{L_1}{\mu} \right\}}{\mu} + \log \frac{\log \frac{L_1^3}{2 \mu^2 \e_G}}{\log \frac{L_1 L_2}{\mu^2}} \right)$, where $L_1$ and $L_2$ are Lipschitz constans for first and second order derivatives respectively. We want to emphasize, that the obtained $\log \log (1/\e)$ dependency on $\e$ cannot be improved even in convex optimization \cite{kornowski2020high}.

    Thirdly, we apply framework from \cite{dvurechensky2019near} to the Algorithm \ref{alg:restarted highordermirrorprox with crn-spp} to solve the problem \eqref{eq:saddle problem gradient norm} and obtain the Algorithm \ref{alg:restarted highordermirrorprox with crn-spp gradient norm}. Its convergence rate is $\tilde O \left( \left( \frac{L_p R^p}{\e_\nabla} \right)^\frac{2}{p + 1} \right)$, where by tilde we mean additional multiplicative $\log$ factor, and $\e_\nabla$ is an error in terms of gradient norm of the objective.
    
    Our paper is organized as follows. First of all, in Section \ref{sec:preliminaries} we provide necessary notations and assumptions (Section \ref{sec:assumptions}). Then, we present the new algorithm and obtain its convergence rate in Section \ref{sec:main results}. Firstly, in Section \ref{sec:restarted highordermirrorprox} we talk only about restarted algorithm from \cite{bullins2020higher} and get its complexity. Secondly, in Section \ref{sec:local quadratic convergence} we describe how to connect it to Algorithm \ref{alg:crn-spp} from \cite{huang2020cubic} in its quadratic convergence area and get the final Algorithm \ref{alg:restarted highordermirrorprox with crn-spp} convergence rate. Thirdly, in Section \ref{sec:gradient norm} we focus on how to wrap Algorithm \ref{alg:restarted highordermirrorprox with crn-spp} in a framework from \cite{dvurechensky2019near} and obtain its complexity. Finally, in Section \ref{sec:discussion} we discuss our results and present some possible directions for future work. 
    
\section{Preliminaries}\label{sec:preliminaries}

    We use $z \in \R^n \times \R^m$ to denote the pair $(x,y)$, $\nabla^p g(z)[h_1, ..., h_p],\ p \ge 1$ to denote directional derivative of $g$ at $z$ along directions $h_i \in \R^n \times \R^m, i = 1, ..., p$. The norm of the $p$-th order derivative is defined as
    \[
        \|\nabla^p g(z)\|_2 := \max_{h_1,...,h_p \in \R^n \times \R^m} \{ |\nabla^p g(z)[h_1,...,h_p]| : \|h_i\|_2 \leq 1, i = 1,...,p\}
    \]
    or equivalently
    \[
        \|\nabla^p g(z)\|_2 := \max_{h \in \R^n \times \R^m} \{ |\nabla^p g(z)[h]^p| : \|h\|_2 \leq 1\}.
    \]
    Here we denote $\nabla^p g(z)[h,...,h]$ as $\nabla^p g(z)[h]^p$. Also here and below $\| \cdot \|_2$ is a Euclidean norm for vectors. 
    
    Taylor approximation of some function $f$ at point $z$ up to the order of $p$ we denote by
    \[
        \Phi^f_{z, p}(\hat z) := \sum_{i = 0}^p \frac{1}{i!} \nabla^i f(z) [\hat z - z]^i.
    \]
    For ease of notation, the Taylor approximation of the objective $g$ we denote by $\Phi_{(x, y), p}(\hat x, \hat y) \equiv \Phi_{z, p}(\hat z) \equiv \Phi^g_{z, p}(\hat z)$.
    
    By $D: \mathcal Z \times \mathcal Z \to \R^n$ we denote Bregman divergence induced by a function $d: \mathcal Z \to \R$, which is continuously-differentiable and $1$-strongly convex.
    The definition of Bregman divergence is
    \[
        D(z_1, z_2) := d(z_1) - d(z_2) - \la \nabla d(z_2), z_1 - z_2 \ra.
    \]
    In our paper we use half of squared Euclidean distance as Bregman divergence 
    % \pd{usually it is with $\frac{1}{2}$ multiplier}
    \begin{equation}\label{eq:bregman div, euclidean distance}
        D(z_1, z_2) = \frac{1}{2}\|z_1 - z_2\|_2^2.
    \end{equation}
    
    During the analysis of convergence of our approach for gradient norm minimization \eqref{eq:saddle problem gradient norm} we will need the regularized Taylor approximation of objective $g$:
    \begin{gather*}
        \Omega_{(x, y), p, L_p}(\hat x, \hat y) := \\ 
        \Phi_{(x, y), p}(\hat x, \hat y) + \frac{L_p (\sqrt{2})^{p - 1}}{(p + 1)!} \|\hat x - x\|_2^{p + 1} - \frac{L_p (\sqrt{2})^{p - 1}}{(p + 1)!} \|\hat y - y\|_2^{p + 1}.
    \end{gather*}
    Its min-max point we denote by
    \[
        T_{p,L_p}^g \left( x, y \right)\in \mbox{Arg} \min_{\tilde x \in \R^n} \max_{\tilde y\in \R^m} \left\{ \Omega_{(x, y), p, L_p}(\tilde x, \tilde y) \right\}.
    \]
    
    As we mentioned earlier, in this paper we consider two types of SPP: classic minimax problem \eqref{eq:saddle problem} and gradient norm minimization \eqref{eq:saddle problem gradient norm}. We need to introduce the definitions of approximate solutions of these problems. We use different indices in error notations for these problems to avoid ambiguity. 
    
    Firstly, the problem \eqref{eq:saddle problem} is usually solved in terms of the duality gap
    \begin{equation}\label{eq:dual gap}
        G_{\mathcal X \times \mathcal Y}(x, y) := \max_{y' \in \mathcal Y} g(x, y') - \min_{x' \in \mathcal X} g(x', y).
    \end{equation}
    Since in our case $\mathcal X = \R^n$ and $\mathcal Y = \R^m$, we drop the notations of these sets from index of the duality gap and denote duality gap just as $G(x, y)$. Then, we define $\e_G$-approximate solution of \eqref{eq:saddle problem}:
    \begin{equation}\label{eq:saddle problem approximate}
        \tilde x^* \in \R^n, \tilde y^* \in \R^m \Rightarrow G(\tilde x^*, \tilde y^*) \le \e_G.
    \end{equation}
    
    Secondly, for the problem \eqref{eq:saddle problem gradient norm} we don't need any additional functionals, and $\e_\nabla$-approximate solution of \eqref{eq:saddle problem gradient norm} is of the form
    \begin{equation}\label{eq:saddle problem gradient norm approximate}
        \tilde x^* \in \R^n, \tilde y^* \in \R^m \Rightarrow \|\nabla g(\tilde x^*, \tilde y^*)\|_2 \le \e_\nabla.
    \end{equation}

    \subsection{Assumptions}\label{sec:assumptions}

        We assume objective $g$ is strongly convex, strongly concave and $p$-times differentiable.
        \begin{assumption}\label{ass:strong convexity concavity}
            $g(x, y)$ is $\mu$-strongly convex in $x$ and $\mu$-strongly concave in $y$. 
        \end{assumption}
        Recall that the definition of strong convexity and strong concavity is as follows.
        \begin{definition}
            $g: \R^n \times \R^m \to \R$ is called $\mu$-strongly convex and $\mu$-strongly concave if
            \begin{gather}
                \forall x_1, x_2 \in \R^n,\ y \in \R^m \Rightarrow \la \nabla_x g(x_1, y) - \nabla_x g(x_2, y), x_1 - x_2 \ra \ge \mu \|x_1 - x_2\|_2^2, \label{eq:strong convexity-concavity}  \\
                \forall y_1, y_2 \in \R^m,\ x \in \R^n \Rightarrow \la -\nabla_y g(x, y_1) + \nabla_y g(x, y_2), y_1 - y_2 \ra \ge \mu \|y_1 - y_2\|_2^2.
            \end{gather}
        \end{definition}
        
        Before showing the connection between problem \eqref{eq:saddle problem} and MVI \eqref{eq:mvi} we need the definition of strong monotonicity.
        \begin{definition}
            $F: \mathcal Z \to \R^n$ is strongly monotone if
            \begin{equation}\label{eq:strongly monotone}
                \la F(z_1) - F(z_2), z_1 - z_2 \ra \ge \mu\|z_1 - z_2\|_2^2.
            \end{equation}
        \end{definition}
        Denote $z = \begin{pmatrix} x \\ y \end{pmatrix}$, and operator $F: \R^n \times \R^m \to \R^n \times \R^m$:
        \begin{equation}\label{eq:g to F}
            F(z) = F(x, y) := \begin{pmatrix} \nabla_x g(x, y) \\ -\nabla_y g(x, y) \end{pmatrix}.
        \end{equation}
        % Since $g(x, y)$ is strongly-convex-strongly-concave, it is easy to show, that the operator $F(z)$ is strongly monotone. 
        %According to these considerations, when we solve the min-max problem \eqref{eq:saddle problem} we also tackle the MVI problem \eqref{eq:mvi} with specific operator $F$. 
        According to these definitions, the min-max problem \eqref{eq:saddle problem} can be tackled via solving the MVI problem \eqref{eq:mvi} with the specific operator $F$ given in \eqref{eq:g to F}. In our work we use the following assumptions. 
        
        \begin{assumption}\label{ass:1-lipschitz}
            $F(z)$ satisfies first order Lipschitz condition:
            \begin{gather}
                \|F(z_1) - F(z_2)\|_2 \le L_1 \|z_1 - z_2\|_2 \notag \\
                \Leftrightarrow \|\nabla g(z_1) - \nabla g(z_2)\|_2 \le L_1 \|z_1 - z_2\|_2. \label{eq:1-lipschitz}
            \end{gather}
        \end{assumption}
        
        \begin{assumption}\label{ass:2-lipschitz}
            $F(z)$ satisfies second order Lipschitz condition:
            \begin{gather}
                \|\nabla F(z_1) - \nabla F(z_2)\|_2 \le L_2 \|z_1 - z_2\|_2 \notag \\
                \Leftrightarrow \|\nabla^2 g(z_1) - \nabla^2 g(z_2)\|_2 \le L_2 \|z_1 - z_2\|_2. \label{eq:2-lipschitz}
            \end{gather}
        \end{assumption}
        
        \begin{assumption}\label{ass:p-lipschitz}
            $F(z)$ satisfies $p$-th order Lipschitz condition ($p$-smooth):
            \begin{gather}
                \|\nabla^{p-1} F(z_1) - \nabla^{p-1} F(z_2)\|_2 \le L_p \|z_1 - z_2\|_2 \notag \\
                 \Leftrightarrow \|\nabla^p g(z_1) - \nabla^p g(z_2)\|_2 \le L_2 \|z_1 - z_2\|_2. \label{eq:p-lipschitz}
            \end{gather}
        \end{assumption}
        We should note, that, to be consistent with \cite{bullins2020higher}, we define $p$-th order smoothness (Lipschitzness) of $F$ as a property of $(p - 1)$-th derivative of $F$, and, therefore, as a property of $p$-th derivative of $g$. 

\section{Main results}\label{sec:main results}
    Firstly, in this section we propose the algorithm for finding $\e_G$-approximate solution to problem \eqref{eq:saddle problem approximate}, where $ g \colon \mathbb{R}^n \times \mathbb{R}^m  \to \R$ is $p$-smooth and $\mu$-strongly-convex-concave (assumptions \ref{ass:p-lipschitz} and \ref{ass:strong convexity concavity}), which allows to achieve iteration complexity of $O \left( \left( \frac{L_p R^{p - 1}}{\mu} \right)^\frac{2}{p + 1} \log \frac{\mu R^2}{\e_G} \right)$, where $R \geqslant \| z_1 - z^{*} \|_2$. This algorithm is a restarted modification of Algorithm \ref{alg:highordermirrorprox}.
    
    Secondly, we develop the algorithm for tackling the same problem, where $g$ is first, second and $p$-th order Lipschitz and $ \mu $-strongly-convex-concave function (all assumptions \ref{ass:strong convexity concavity}, \ref{ass:1-lipschitz}, \ref{ass:2-lipschitz}, \ref{ass:p-lipschitz}). It involves the idea of exploiting previous algorithm and then switching to the Algorithm \ref{alg:crn-spp} in its quadratic convergence area. Thus, we obtain the Algorithm \ref{alg:restarted highordermirrorprox with crn-spp}, that allows to achieve iteration complexity of $O \left( \left( \frac{L_p R^{p - 1}}{\mu} \right)^\frac{2}{p + 1}\log \frac{L_2 R \max \left\{ 1, \frac{L_1}{\mu} \right\}}{\mu} + \log \frac{\log \frac{L_1^3}{2 \mu^2 \e_G}}{\log \frac{L_1 L_2}{\mu^2}} \right)$.
    
    Thirdly, we propose the algorithm to find $\e_\nabla$-approximate solution to problem \eqref{eq:saddle problem gradient norm approximate}, where all the assumptions \ref{ass:strong convexity concavity}, \ref{ass:1-lipschitz}, \ref{ass:2-lipschitz}, \ref{ass:p-lipschitz} hold. To achieve this we use the Algorithm \ref{alg:restarted highordermirrorprox with crn-spp}, which we mentioned earliear, inside the framework from \cite{dvurechensky2019near}. Final complexity of such algorithm in terms of norm of the gradient is $\tilde O \left( \left( \frac{L_p R^p}{\e_\nabla} \right)^\frac{2}{p + 1} \right)$, where by tilde we mean additional multiplicative $\log$ factor.
    
    \subsection{Restarted HighOrderMirrorProx}\label{sec:restarted highordermirrorprox}
    
        % As a next step we introduce Algorithm \ref{alg:crn-spp} and theorem, that proves its local quadratic convergence. This theorem is based on Theorem 3.6 from \cite{huang2020cubic}.
    
        As mentioned earlier, in this subsection we provide restarted modification of Algorithm \ref{alg:highordermirrorprox}. But, initially, we need to give some additional information from \cite{bullins2020higher}.
        
        Since our goal is an approximate solution to MVI, we define its $\e$-approximate solution as
        \begin{equation}\label{eq:eps approximate solution}
            z^* \in \mathcal Z: \forall z \in \mathcal Z \Rightarrow \la F(z), z^* - z \ra \le \e.
        \end{equation}
        At the same time, the bounds of Algorithm \ref{alg:highordermirrorprox} is of the form
        \begin{equation}\label{eq:bullins eps approximate solution}
            \forall z \in \mathcal Z \Rightarrow \frac{1}{\Gamma_T} \sum_{t = 1}^T \gamma_t \la F(z_t), z_t - z \ra \le \e,
        \end{equation}
        where points $z_t$ and $\gamma_t > 0$ are produced by the Algorithm \ref{alg:highordermirrorprox}, and $\Gamma_T = \sum_{t = 1}^T \gamma_t$. The following lemma establishes the relation between \eqref{eq:eps approximate solution} and \eqref{eq:bullins eps approximate solution}.
        
        \begin{lemma}[Lemma 2.7 from \cite{bullins2020higher}]\label{lemma:mvi eps solutions connection}
            Let $F: \mathcal Z \to \R^n,$ be monotone, $z_t \in \mathcal Z,\ t = 1, ..., T$, and let $\gamma_t > 0$. Let $\bar z_t = \frac{1}{\Gamma_T} \sum_{t = 1}^T \gamma_t z_t$. Assume \eqref{eq:bullins eps approximate solution} holds. Then $\bar z_t$ is an $\e$-approximate solution to \eqref{eq:mvi}.
        \end{lemma}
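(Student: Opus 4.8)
The plan is to unwind the definition of the averaged point $\bar z_t$ and exploit monotonicity of $F$ to turn the hypothesis \eqref{eq:bullins eps approximate solution}, which bounds a weighted average of the \emph{local} linearizations $\la F(z_t), z_t - z\ra$, into a statement about the \emph{single} linearization $\la F(\bar z_t), \bar z_t - z\ra$ that appears in the definition \eqref{eq:eps approximate solution} of an $\e$-approximate solution. The key observation is that monotonicity gives, for each $t$, the inequality $\la F(\bar z_t), z_t - \bar z_t\ra \le \la F(z_t), z_t - \bar z_t\ra$, i.e.\ replacing the operator evaluated at the average by the operator evaluated at $z_t$ only increases the inner product against the direction $z_t - \bar z_t$.

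Concretely, fix an arbitrary $z \in \mathcal Z$. First I would write
\[
    \la F(\bar z_t), \bar z_t - z\ra = \frac{1}{\Gamma_T}\sum_{t=1}^T \gamma_t \la F(\bar z_t), z_t - z\ra,
\]
which holds simply because $\bar z_t - z = \frac{1}{\Gamma_T}\sum_t \gamma_t (z_t - z)$ and $F(\bar z_t)$ is a fixed vector that can be pulled out of the sum. Next, split each summand as $\la F(\bar z_t), z_t - z\ra = \la F(\bar z_t), z_t - \bar z_t\ra + \la F(\bar z_t), \bar z_t - z\ra$; the second piece, when summed with weights $\gamma_t/\Gamma_T$, just reproduces $\la F(\bar z_t), \bar z_t - z\ra$ again, so after rearranging one is left needing
\[
    \frac{1}{\Gamma_T}\sum_{t=1}^T \gamma_t \la F(\bar z_t), z_t - \bar z_t\ra \le \e .
\]
Here monotonicity enters: $\la F(\bar z_t) - F(z_t), \bar z_t - z_t\ra \ge 0$, equivalently $\la F(\bar z_t), z_t - \bar z_t\ra \le \la F(z_t), z_t - \bar z_t\ra$. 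Summing with the nonnegative weights $\gamma_t$ and then writing $z_t - \bar z_t = (z_t - z) - (\bar z_t - z)$, the term $\sum_t \gamma_t \la F(z_t), \bar z_t - z\ra$ telescopes against $\la F(\bar z_t),\bar z_t - z\ra$ again, and what survives is exactly $\frac{1}{\Gamma_T}\sum_t \gamma_t \la F(z_t), z_t - z\ra$, which is $\le \e$ by the assumption \eqref{eq:bullins eps approximate solution}. Chaining these inequalities yields $\la F(\bar z_t), \bar z_t - z\ra \le \e$ for the arbitrary $z$, which is precisely \eqref{eq:eps approximate solution}.

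The only mild subtlety — and the step I would be most careful about — is the bookkeeping of the two cancellations (the $\la F(\bar z_t),\bar z_t - z\ra$ terms appearing on both sides), since it is easy to drop a sign or a factor of $1/\Gamma_T$ when moving terms across. A clean way to avoid this is to prove directly the single inequality
\[
    \la F(\bar z_t), \bar z_t - z\ra \;\le\; \frac{1}{\Gamma_T}\sum_{t=1}^T \gamma_t \la F(z_t), z_t - z\ra,
\]
obtained by combining $\la F(\bar z_t),\bar z_t - z\ra = \frac{1}{\Gamma_T}\sum_t\gamma_t\la F(\bar z_t),\bar z_t - z\ra$ with the pointwise monotonicity bound $\la F(\bar z_t),\bar z_t - z\ra \le \la F(z_t),\bar z_t - z\ra$ and then $\la F(z_t),\bar z_t - z\ra$ further rewritten via convexity of the sum; the right-hand side is $\le \e$ by hypothesis, finishing the proof. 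No strong convexity or Lipschitz assumptions are needed here — plain monotonicity of $F$ suffices, consistent with the statement.
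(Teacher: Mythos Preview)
The paper does not supply its own proof of this lemma; it is quoted verbatim from \cite{bullins2020higher}. That said, your proposal has a genuine gap that is worth flagging.

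You are aiming at the wrong inequality. The paper's definition \eqref{eq:eps approximate solution} of an $\e$-approximate solution is the \emph{weak} (Minty) formulation: one must show $\la F(z), \bar z_T - z\ra \le \e$ for every $z \in \mathcal Z$, with $F$ evaluated at the test point $z$, not at the candidate $\bar z_T$. Your entire argument is directed at the \emph{strong} formulation $\la F(\bar z_T), \bar z_T - z\ra \le \e$, which is a different (and generally harder) target. For the correct target the proof is a single step: monotonicity gives $\la F(z), z_t - z\ra \le \la F(z_t), z_t - z\ra$ for each $t$; averaging with the weights $\gamma_t/\Gamma_T$ and using $\bar z_T - z = \frac{1}{\Gamma_T}\sum_t \gamma_t(z_t - z)$ yields
\[
    \la F(z), \bar z_T - z\ra \;=\; \frac{1}{\Gamma_T}\sum_{t=1}^T \gamma_t \la F(z), z_t - z\ra \;\le\; \frac{1}{\Gamma_T}\sum_{t=1}^T \gamma_t \la F(z_t), z_t - z\ra \;\le\; \e.
\]

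Independently of the misread target, the ``clean'' route you sketch contains an incorrect step: the inequality $\la F(\bar z_T),\bar z_T - z\ra \le \la F(z_t),\bar z_T - z\ra$ is \emph{not} a consequence of monotonicity, since monotonicity controls $\la F(\bar z_T) - F(z_t), \bar z_T - z_t\ra$, not the pairing against the unrelated direction $\bar z_T - z$. Likewise, in the first route the claimed ``telescoping'' of $\sum_t \gamma_t \la F(z_t), \bar z_T - z\ra$ against $\la F(\bar z_T),\bar z_T - z\ra$ does not occur: the former involves the varying vectors $F(z_t)$ and cannot be identified with the latter without an additional hypothesis (e.g.\ linearity of $F$). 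So even for the strong formulation your argument would not close.
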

        
        MVI problem \eqref{eq:mvi}, which is sometimes called "weak MVI", is closely connected to strong MVI problem, where we need to find 
        \begin{equation}\label{eq:strong mvi}
            z^* \in \mathcal Z: \forall z \in \mathcal Z \Rightarrow \la F(z^*), z^* - z \ra \le 0.
        \end{equation}
        If $F$ is continuous and monotone, the problems \eqref{eq:mvi} and \eqref{eq:strong mvi} are eqiuvalent.
    
        The convergence rate of the Algorithm \ref{alg:highordermirrorprox} is stated in the following lemma.
        
        \begin{algorithm}[t]
        	\caption{HighOrderMirrorProx [Algorithm 1 in \cite{bullins2020higher}]}\label{alg:highordermirrorprox}
        	\begin{algorithmic}[1]
        		\STATE \textbf{Input} $z_1 \in \mathcal Z, p \ge 1, T > 0$.
        		
        		\FOR{t = 1 \TO T}
        		\STATE Determine $\gamma_t$, $\hat z_t$ such that:
        		\begin{gather*}
        		    \hat z_t = \arg \min_{z \in \mathcal Z} \{\gamma_t \langle \Phi^F_{z_t, p}(\hat z_t), z - z_t \rangle + D(z, z_t) \}, \\
        		    \frac{p!}{32 L_p \|\hat z_t - z_t\|^{p - 1}_2} \le \gamma_t \le \frac{p!}{16 L_p \|\hat z_t - z_t\|_2^{p - 1}}, \\
        		    z_{t + 1} = \arg \min_{z \in \mathcal Z} \{\langle \gamma_t F(\hat z_t), z - \hat z_t \rangle + D(z, z_t)\}.
        		\end{gather*}
        		\ENDFOR
        		\STATE Define $\Gamma_T \overset{def}{=} \sum_{t = 1}^T \gamma_t$
        		\RETURN $\bar z_T \overset{def}{=} \frac{1}{\Gamma_T} \sum_{t = 1}^T \gamma_t \hat z_t$.
        	\end{algorithmic}
        \end{algorithm}
        
        \begin{lemma}[Lemma 4.1 from \cite{bullins2020higher}]
            Suppose $F: \mathcal Z \to \mathbb R^n$ is $p$-th order Lipschitz and let $\Gamma_T = \sum_{t = 1}^T \gamma_t$. Then, the iterates $\{\hat z_t\}_{t \in [T]}$, generated by Algorithm 1, satisfy
            
            \begin{equation}\label{eq:highordermirrorprox complexity}
                \forall z \in \mathcal Z \Rightarrow \frac{1}{\Gamma_T} \sum_{t = 1}^T \langle \gamma_t F(\hat z_t), \hat z_t - z \rangle \le \frac{16 L_p}{p!} \bigg(\frac{D(z, z_1)}{T}\bigg)^\frac{p + 1}{2}.
            \end{equation}
        \end{lemma}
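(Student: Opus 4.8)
The plan is to carry out the classical Mirror-Prox telescoping argument, but with the extrapolation step driven by the $p$-th order Taylor polynomial $\Phi^F_{z_t,p}(\hat z_t)$ in place of the operator value $F(z_t)$, and then to convert the adaptive step sizes $\gamma_t$ into the rate $T^{-(p+1)/2}$ by a H\"older inequality.

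First I would record the first-order optimality conditions of the two proximal subproblems defining $\hat z_t$ and $z_{t+1}$. Since $D(z_1,z_2)=\tfrac12\|z_1-z_2\|_2^2$, the three-point identity for the Bregman divergence holds: optimality of $z_{t+1}$ gives, for every $z\in\mathcal Z$, $\langle\gamma_t F(\hat z_t),z_{t+1}-z\rangle\le D(z,z_t)-D(z,z_{t+1})-D(z_{t+1},z_t)$, and optimality of $\hat z_t$ evaluated at the test point $z=z_{t+1}$ gives $\langle\gamma_t\Phi^F_{z_t,p}(\hat z_t),\hat z_t-z_{t+1}\rangle\le D(z_{t+1},z_t)-D(z_{t+1},\hat z_t)-D(\hat z_t,z_t)$. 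Next I would split $\langle\gamma_t F(\hat z_t),\hat z_t-z\rangle$ as $\langle\gamma_t F(\hat z_t),z_{t+1}-z\rangle+\langle\gamma_t(F(\hat z_t)-\Phi^F_{z_t,p}(\hat z_t)),\hat z_t-z_{t+1}\rangle+\langle\gamma_t\Phi^F_{z_t,p}(\hat z_t),\hat z_t-z_{t+1}\rangle$ and bound the outer two pieces by the two inequalities above. For the middle, Taylor-error piece I would use the $p$-th order Lipschitz property of $F$ (Assumption \ref{ass:p-lipschitz}) in the form $\|F(\hat z_t)-\Phi^F_{z_t,p}(\hat z_t)\|_2\le\frac{L_p}{p!}\|\hat z_t-z_t\|_2^p$, together with the upper step-size bound $\gamma_t\le\frac{p!}{16L_p\|\hat z_t-z_t\|_2^{p-1}}$, to get $\gamma_t\|F(\hat z_t)-\Phi^F_{z_t,p}(\hat z_t)\|_2\le\tfrac1{16}\|\hat z_t-z_t\|_2$; Cauchy--Schwarz and Young's inequality then bound this piece by $\tfrac1{16}\big(D(\hat z_t,z_t)+D(z_{t+1},\hat z_t)\big)$, which is absorbed by the negative Bregman terms. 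Adding the three pieces and summing over $t$, the Bregman terms telescope and one is left, for every $z\in\mathcal Z$, with $\sum_{t=1}^T\langle\gamma_t F(\hat z_t),\hat z_t-z\rangle+c\sum_{t=1}^T\|\hat z_t-z_t\|_2^2\le D(z,z_1)$ for an absolute constant $c>0$.

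It then remains to divide by $\Gamma_T$ and to lower-bound it. If $\sum_t\langle\gamma_t F(\hat z_t),\hat z_t-z\rangle\le0$ the claim is trivial; otherwise the displayed estimate gives $\sum_t r_t^2\le\frac1c D(z,z_1)$ with $r_t:=\|\hat z_t-z_t\|_2$. Using the lower step-size bound $\gamma_t\ge\frac{p!}{32L_p r_t^{p-1}}$ and H\"older's inequality with conjugate exponents $\tfrac{p+1}{p-1}$ and $\tfrac{p+1}{2}$ applied to $T=\sum_t r_t^{2(p-1)/(p+1)}r_t^{-2(p-1)/(p+1)}$ (for $p=1$ the step sizes are constant and this is the usual Mirror-Prox bound), one gets $\sum_t r_t^{-(p-1)}\ge T^{(p+1)/2}\big(\sum_t r_t^2\big)^{-(p-1)/2}$, hence $\Gamma_T\ge\frac{p!}{32L_p}\,T^{(p+1)/2}\big(\tfrac1c D(z,z_1)\big)^{-(p-1)/2}$. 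Plugging this into $\frac1{\Gamma_T}\sum_t\langle\gamma_t F(\hat z_t),\hat z_t-z\rangle\le D(z,z_1)/\Gamma_T$ gives a bound of the form $O(L_p/p!)\cdot\big(D(z,z_1)/T\big)^{(p+1)/2}$; keeping track of the numerical constants and of the precise step-size window yields the coefficient in \eqref{eq:highordermirrorprox complexity}.

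The hard part is this last step: converting the trajectory-dependent step sizes $\gamma_t$ into a clean dependence on $T$. It requires keeping the term $c\sum_t\|\hat z_t-z_t\|_2^2$ in the telescoping on purpose (which is why the algorithm uses step sizes a constant factor below the critical value) and then invoking H\"older --- this is the genuine departure from the fixed-step Mirror-Prox analysis. A minor point to handle with care is that $\hat z_t$ is defined implicitly, appearing inside its own Taylor polynomial; but existence of $(\gamma_t,\hat z_t)$ is presupposed by the algorithm, and its optimality condition is the variational inequality used above.
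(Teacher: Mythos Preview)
The paper does not supply its own proof of this lemma: it is quoted verbatim from \cite{bullins2020higher} and used as a black box. So there is no in-paper argument to compare against; the relevant benchmark is Bullins's original proof.

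Your sketch is correct and is essentially Bullins's argument. The three ingredients --- the Bregman three-point inequalities for the two proximal steps, the Taylor-remainder bound $\|F(\hat z_t)-\Phi^F_{z_t,p}(\hat z_t)\|_2\le\frac{L_p}{p!}\|\hat z_t-z_t\|_2^{p}$ combined with the upper step-size cap to absorb the cross term, and the H\"older trick on $T=\sum_t r_t^{2(p-1)/(p+1)}r_t^{-2(p-1)/(p+1)}$ to turn the lower step-size cap into a lower bound on $\Gamma_T$ --- are exactly what is done there. Retaining the residual $c\sum_t\|\hat z_t-z_t\|_2^2$ after telescoping, rather than discarding it, is indeed the key device that makes the H\"older step work, and you identify this correctly.

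One small caution: your closing line ``keeping track of the numerical constants \ldots\ yields the coefficient'' is doing real work. With the Young-inequality split you wrote ($\tfrac1{16}$ of each Bregman piece) the surviving constant in front of $\sum_t\|\hat z_t-z_t\|_2^2$ is $\tfrac{15}{32}$, and plugging this through the H\"older lower bound on $\Gamma_T$ gives a leading constant larger than $16$ for $p\ge2$. Recovering exactly $\tfrac{16L_p}{p!}$ requires a slightly sharper handling of the cross term (or a direct appeal to the computations in \cite{bullins2020higher}); the structure of your argument is right, but the final constant should be checked rather than asserted.
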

        Thus, these two lemmas tell us, that if $z_t$ and $\gamma_t$ are generated by the Algorithm \ref{alg:highordermirrorprox}, and the right hand side of \eqref{eq:highordermirrorprox complexity} is smaller than $\e$, then $\bar z_t = \frac{1}{\Gamma_T} \sum_{t = 1}^T \gamma_t z_t$ is an $\e$-solution to regular MVI \eqref{eq:eps approximate solution}. Hence, it is also a solution to a convex-concave SPP. The natural way to improve the method for convex-concave problem in tighter strongly-convex-strongly-concave setting is to use restarts \cite{stonyakin2018generalized}. As a result, we obtain Algorithm \ref{alg:restarted highordermirrorprox}. 
        
        \begin{algorithm}[t]
        	\caption{Restarted HighOrderMirrorProx}\label{alg:restarted highordermirrorprox}
        	\begin{algorithmic}[1]
        		\STATE \textbf{Input} $z_1 \in \mathcal Z, p \ge 1, 0 < \e_G < 1$, $R: R \geq \|z_1 - z^*\|_2$.
        		
        		\STATE $k = 1$
        		\STATE $\tilde z_1 = z_1$
                \FOR{$i \in [n]$, where $n = \left\lceil \frac{1}{2} \log \frac{\mu R^2}{\e_G} \right\rceil$}
                    \STATE Set $R_i = \frac{R}{2^{i - 1}}$
                
        		    \STATE Set $T_i = \left\lceil \bigg( \frac{64 L_p R_i^{p - 1}}{\mu} \bigg)^\frac{2}{p + 1} \right\rceil$
        		    
        		    \STATE Run Algorithm \ref{alg:highordermirrorprox} with $\tilde z_{i}$, $p$,  $T_i$ as input
        		    
        		    \STATE $\tilde z_{i + 1} = \bar z_{T_i}$
            	\ENDFOR
            	\RETURN $\tilde z_{i}$
        	\end{algorithmic}
        \end{algorithm}
        
        \begin{theorem}\label{th:restarted highordermirrorprox conv}
            Suppose $F: \R^n \times \R^m \to \R^n \times \R^m$, that is defined in \eqref{eq:g to F}, is $p$-th order Lipschitz and $\mu$-strongly monotone (Assumptions \ref{ass:strong convexity concavity} and \ref{ass:p-lipschitz} hold).  Denote $R$ such that $ R \ge \|z_1 - z^*\|_2$. Then Algorithm \ref{alg:restarted highordermirrorprox} complexity is
            \begin{equation}
                O \left( \left( \frac{L_p R^{p - 1}}{\mu} \right)^\frac{2}{p + 1} \log \frac{\mu R^2}{\e_G} \right).
            \end{equation}
        \end{theorem}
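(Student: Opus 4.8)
The plan is to show that the restart scheme upgrades the sublinear guarantee \eqref{eq:highordermirrorprox complexity} of Algorithm \ref{alg:highordermirrorprox} into linear convergence of the outer iterates $\tilde z_i$ towards $z^*$, and then to sum the inner iteration counts $T_i$. Throughout I use that $F$ from \eqref{eq:g to F} is continuous (being $p$-th order Lipschitz) and monotone, so that the solution $z^*$ of \eqref{eq:mvi} also solves the strong MVI \eqref{eq:strong mvi}; in particular $\la F(z^*), z - z^*\ra \ge 0$ for all $z$.

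The core is the inductive claim $\|\tilde z_i - z^*\|_2 \le R_i$. The base case is the hypothesis $R \ge \|z_1 - z^*\|_2$. For the inductive step, suppose $\|\tilde z_i - z^*\|_2 \le R_i$ and consider the $i$-th call of Algorithm \ref{alg:highordermirrorprox}, run from $\tilde z_i$ for $T_i$ steps and producing $\tilde z_{i+1} = \bar z_{T_i} = \Gamma_{T_i}^{-1}\sum_{t=1}^{T_i}\gamma_t\hat z_t$. Taking $z = z^*$ in \eqref{eq:highordermirrorprox complexity} and bounding each term on the left from below by strong monotonicity \eqref{eq:strongly monotone} together with $\la F(z^*), \hat z_t - z^*\ra \ge 0$ gives
\[
    \frac{\mu}{\Gamma_{T_i}}\sum_{t=1}^{T_i}\gamma_t\|\hat z_t - z^*\|_2^2 \;\le\; \frac{1}{\Gamma_{T_i}}\sum_{t=1}^{T_i}\gamma_t\la F(\hat z_t), \hat z_t - z^*\ra \;\le\; \frac{16 L_p}{p!}\left(\frac{D(z^*, \tilde z_i)}{T_i}\right)^{\frac{p+1}{2}}.
\]
By convexity of $\|\cdot - z^*\|_2^2$ and Jensen's inequality, the left-hand side dominates $\mu\|\tilde z_{i+1} - z^*\|_2^2$. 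Substituting $D(z^*, \tilde z_i) = \tfrac12\|\tilde z_i - z^*\|_2^2 \le \tfrac12 R_i^2$ and the lower bound $T_i \ge (64 L_p R_i^{p-1}/\mu)^{2/(p+1)}$, and using $p!\ge 1$ and $2^{(p+1)/2}\ge 2$ for $p \ge 1$, one simplifies to $\|\tilde z_{i+1} - z^*\|_2^2 \le R_i^2/8 < R_{i+1}^2$, closing the induction (the cushion confirms the constant $64$ in $T_i$ is safely large).

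It then follows that the returned point $\tilde z_n$ satisfies $\|\tilde z_n - z^*\|_2^2 \le R_n^2 = R^2/4^{\,n-1} = O(\e_G/\mu)$ for $n = \lceil\tfrac12\log\tfrac{\mu R^2}{\e_G}\rceil$. To turn this into the duality-gap bound \eqref{eq:saddle problem approximate}, I combine it with the strong-convexity--concavity estimate $G(x,y) \le \la F(x,y), (x,y)-(x',y')\ra - \tfrac\mu2\|(x,y)-(x',y')\|_2^2$, where $x' \in \arg\min_{u} g(u,y)$ and $y' \in \arg\max_{v} g(x,v)$; since Assumption \ref{ass:p-lipschitz} makes $\nabla g$ locally Lipschitz, on the bounded region visited by the algorithm one has $\|(x',y')-z^*\|_2 \le \tfrac\ell\mu\|(x,y)-z^*\|_2$ and $\|F(x,y)\|_2 \le \ell\|(x,y)-z^*\|_2$ for a finite $\ell$, hence $G(\tilde z_n) \le C\|\tilde z_n - z^*\|_2^2$ with $C$ depending only on the problem constants; this merely inflates the target accuracy $\e_G/\mu$ by a constant factor, i.e.\ adds $O(1)$ restarts, without changing the rate. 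Finally, because $R_i = R/2^{\,i-1}$ and $p \ge 1$, $T_i \le (64 L_p R^{p-1}/\mu)^{2/(p+1)}\,2^{-2(i-1)(p-1)/(p+1)} + 1$, so the total $\sum_{i=1}^n T_i$ is at most $n$ plus a series of ratio $\le 1$, giving $\sum_{i=1}^n T_i = O\!\left(n\,(L_p R^{p-1}/\mu)^{2/(p+1)}\right) = O\!\left((L_p R^{p-1}/\mu)^{2/(p+1)}\log\tfrac{\mu R^2}{\e_G}\right)$, the claimed complexity.

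I expect two points to require the most care. First, the constant bookkeeping in the inductive step: one must verify that the factor $64$ hidden in $T_i$ really forces each restart to at least halve the distance bound, and that the ceilings in $T_i$ and in $n$ do not destroy the geometric summation of the $T_i$. Second, the passage from ``$\|\tilde z_n - z^*\|_2$ small'' to ``$G(\tilde z_n) \le \e_G$'' in the unconstrained setting: one has to rule out the inner maximizer/minimizer pair $(x',y')$ escaping to infinity, which is exactly where the regularity coming from Assumption \ref{ass:p-lipschitz} (rather than strong convexity--concavity alone) is needed.
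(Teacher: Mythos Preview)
Your argument is essentially the paper's argument: both proofs combine \eqref{eq:highordermirrorprox complexity} at $z=z^*$ with strong monotonicity and Jensen's inequality to obtain $\mu\|\bar z_{T_i}-z^*\|_2^2 \le \tfrac{16L_p}{p!}\bigl(\tfrac{R_i^2}{2T_i}\bigr)^{(p+1)/2}$, verify that the choice of $T_i$ forces the distance bound to halve, and then sum $\sum_i T_i$ using $R_i=R/2^{i-1}$. Your constant check (the factor $1/8$) and your handling of the ceilings match the paper's computation.

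The one place you depart from the paper is the last step, and here you are actually more careful. The paper stops after showing that the surrogate $\tfrac{1}{\Gamma_{T_n}}\sum_t\gamma_t\la F(\hat z_t)-F(z^*),\hat z_t-z^*\ra \le \mu R^2/2^{2n}\le\e_G$ and declares this the ``desired accuracy''; it does not explicitly convert to a bound on the duality gap $G(\tilde z_n)$. Your route---use strong convexity--concavity to get $G(z)\le\la F(z),z-z'\ra-\tfrac{\mu}{2}\|z-z'\|_2^2\le\tfrac{1}{2\mu}\|F(z)\|_2^2$, then invoke local Lipschitzness of $F$ (which indeed follows from Assumption~\ref{ass:p-lipschitz} on the bounded region containing all iterates) to obtain $G(\tilde z_n)\le C\|\tilde z_n-z^*\|_2^2$---is a legitimate way to close this gap and costs only an additive $O(1)$ in the restart count. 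The detour through the auxiliary point $(x',y')$ is not strictly necessary for this, but the conclusion is correct.
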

        \begin{proof}
            From \eqref{eq:strong mvi} and \eqref{eq:highordermirrorprox complexity} we get the following:
            \begin{equation}\label{eq:th_1 eq_1}
                \sum_{t = 1}^T \gamma_t \langle F(\hat z_t) - F(z^*); \hat z_t - z^* \rangle
                    \le \frac{16 L_p}{p!} \bigg(\frac{\| z_1 - z^*\|_2^2}{2T}\bigg)^\frac{p + 1}{2}.
            \end{equation}
            From this and the fact that $F(x)$ is $\mu$-strongly monotone we have
            \begin{align}
                \mu \|\bar z_T - z^*\|_2^2 
                    \overset{(*)}{\le} \frac{\mu}{\Gamma_T} \sum_{t = 1}^T \gamma_t \|\hat z_t - z^*\|_2^2 
                    &\overset{\eqref{eq:strongly monotone}}{\le} \frac{1}{\Gamma_T} \sum_{t = 1}^T \gamma_t \langle F(\hat z_t) - F(z^*); \hat z_t - z^* \rangle \label{eq:th_1 eq_2} \\
                    &\overset{\eqref{eq:th_1 eq_1}}{\le} \frac{16 L_p}{p!} \bigg(\frac{\|z_1 - z^*\|_2^2}{2T}\bigg)^\frac{p + 1}{2}, \notag
            \end{align}
            where (*) follows from convexity of $\|z\|_2^2$.
            
            Now we restart the method every time the distance to solution decreases at least twice. Let $T_i$ be such that $\|\bar z_{T_i} - z^*\|_2 \le \frac{\|\tilde z_i - z^*\|_2}{2}$, where $\tilde z_i$ is the point, where we restart our algorithm. Denote $R_1 = R \ge \|\tilde z_1 - z^*\|_2$, $R_i = R_1/2^{i - 1} \ge \|\tilde z_i - z^*\|_2$. Then the number of iterations before $(i + 1)$-th restart is
            \begin{gather*}
                \mu \|\bar z_{T_i} - z^*\|_2^2 \overset{\eqref{eq:th_1 eq_2}}{\le} \frac{16 L_p}{p!} \left( \frac{\|\tilde z_i - z^*\|_2^2}{2T_i} \right)^\frac{p + 1}{2} \le  \frac{16 L_p}{p!} \bigg( \frac{R_i^2}{2T_i} \bigg)^\frac{p + 1}{2} \le \frac{\mu \| \tilde z_i - z^*\|_2^2}{4} \le \frac{\mu R_i^2}{4}\\
                \Leftrightarrow T_i \ge \frac{R_i^2}{2} \bigg( \frac{64 L_p}{p! \mu R_i^2} \bigg)^\frac{2}{p + 1} \ge \bigg( \frac{64 L_p R_i^{p - 1}}{\mu} \bigg)^\frac{2}{p + 1} = \bigg\lceil \bigg( \frac{64 L_p R_i^{p - 1}}{\mu} \bigg)^\frac{2}{p + 1} \bigg\rceil.
            \end{gather*}
            Next we need to obtain the number of restarts $n$, required to achieve the desired accuracy. From \eqref{eq:th_1 eq_1} we get
            \begin{align*}
                \frac{1}{\Gamma_{T_n}} \sum_{t = 1}^{T_n} \gamma_t \langle F(\hat z_t) - F(z^*); \hat z_t - z^* \rangle &\le \frac{16 L_p}{p!} \bigg(\frac{\|\tilde z_n - z^*\|_2^2}{2T_n}\bigg)^\frac{p + 1}{2} \\
                &\le 16 L_p \left(\frac{R_n^2}{\left( \frac{64 L_p R_n^{p - 1}}{\mu} \right)^\frac{2}{p + 1}} \right)^\frac{p + 1}{2} \\
                &= \frac{\mu R_n^2}{4} = \frac{\mu R^2}{2^{2n}} \le \e_G.
            \end{align*}
            \[
                \Leftrightarrow n \ge \frac{1}{2} \log \frac{\mu R^2}{\e_G} = \left\lceil \frac{1}{2} \log \frac{\mu R^2}{\e_G} \right\rceil.
            \]
            
            Finally, the total number of iterations is
            \begin{align*}
                N &= \sum_{i = 1}^n T_i = \sum_{i = 1}^n \bigg\lceil \bigg( \frac{64 L_p R_i^{p - 1}}{\mu} \bigg)^\frac{2}{p + 1} \bigg\rceil \le \left( \frac{64 L_p}{\mu} \right)^\frac{2}{p + 1} \sum_{i = 1}^n R_i^\frac{2(p - 1)}{p + 1} + n \\ 
                &\le \left( \frac{64 L_p R^{p - 1}}{\mu} \right)^\frac{2}{p + 1} n + n \\
                &= \left( \frac{64 L_p R^{p - 1}}{\mu} \right)^\frac{2}{p + 1} \left\lceil \frac{1}{2} \log \frac{\mu R^2}{\e_G} \right\rceil + \left\lceil \frac{1}{2} \log \frac{\mu R^2}{\e_G} \right\rceil \\
                &= O \left( \left( \frac{L_p R^{p - 1}}{\mu} \right)^\frac{2}{p + 1} \log \frac{\mu R^2}{\e_G} \right).
            \end{align*}
            This completes the proof.
            \qed
        \end{proof}
    
    \subsection{Local quadratic convergence}\label{sec:local quadratic convergence}
    
        Just like in previous subsection, becides introducing the Algorithm \ref{alg:crn-spp} and its convergence rate we need to provide some prerequisite information from \cite{huang2020cubic}.
    
        \begin{algorithm}[t]
        	\caption{CRN-SPP [Algorithm 1 in \cite{huang2020cubic}]}\label{alg:crn-spp}
        	\begin{algorithmic}[1]
        	    \STATE \textbf{Input} $z_0$, $\e$, $\bar \gamma > 0$, $\rho, \alpha \in (0, 1)$, $g$ satisfies Assumptions \ref{ass:strong convexity concavity}, \ref{ass:1-lipschitz} and \ref{ass:2-lipschitz}.
        	    \WHILE{$m(z_k) > \e$}
        	        \STATE $\gamma_k = \bar \gamma$
        	        \WHILE{True}
        	            \STATE Solve the subproblem $(\tilde x_{k + 1}, \tilde y_{k + 1}) = \arg \min_x \max_y g_k(x, y; \gamma_k)$
        	           % \STATE $u_k = ,\ v_k = $.
        	            \IF{$\gamma_k (\|\tilde x_{k + 1} - x_k\| + \|\tilde y_{k + 1} - y_k\|) > \mu$}
        	                \STATE $\gamma_{k} = \rho \gamma_k$
        	            \ELSE 
        	                \STATE \textbf{break}
        	            \ENDIF
        	        \ENDWHILE
        	        \STATE $d_k = (\tilde x_{k + 1} - x_k; \tilde y_{k + 1} - y_k)$
        	        \IF{$m(z_k + \alpha d_k) < m(z_k + d_k)$}
        	            \STATE $z_{k + 1} = z_k + \alpha d_k$
        	        \ELSIF{$m(z_k + \alpha d_k) \ge m(z_k + d_k)$}
                        \STATE $z_{k + 1} = z_k + d_k$
                    \ENDIF
                    \STATE $k = k + 1$
        	    \ENDWHILE
        	    \RETURN $z_k$
        	\end{algorithmic}
        \end{algorithm}
        
        Because of strong convexity and strong concavity of $g(x, y)$ a unique solution $z^*$ to a SPP \eqref{eq:saddle problem} exists, and $F(z^*) = 0$. Thus, we can use the following merit function from \cite{huang2020cubic} during analysis of Algorithm \ref{alg:crn-spp} complexity.
        \begin{equation}\label{eq:merit function}
            m(z) := \frac{1}{2}\|F(z)\|_2^2 = \frac{1}{2} (\|\nabla_x g(x, y)\|_2^2 + \|\nabla_y g(x, y)\|^2_2).
        \end{equation}
        Algorithm \ref{alg:crn-spp} solves additional saddle point subproblem on each step, that we denote as
        \begin{gather*}
            \min_{x \in \R^n} \max_{y \in \R^m} g_k(x, y, \gamma_k) := \\ g(z_k) + \la \nabla g(z_k), z - z_k \ra + \frac{1}{2} \nabla^2 g(z_k)[z - z_k]^2 + \frac{\gamma_k}{3} \|x - x_k\|_2^3 - \frac{\gamma_k}{3} \|y - y_k\|_2^3,
        \end{gather*}
        where $\gamma_k$ is some constant.
        
        This proposition provides the relation between the merit function $m(z)$ and the duality gap under assumptions \ref{ass:strong convexity concavity} and \ref{ass:1-lipschitz}.
        \begin{proposition}[Proposition 2.5 from \cite{huang2020cubic}]\label{prop:merit function and dual gap}
            Let assumptions \ref{ass:strong convexity concavity} and \ref{ass:1-lipschitz} hold. For problem \eqref{eq:saddle problem} and any point $z = (x, y)$ the duality gap \eqref{eq:dual gap} and the merit function \eqref{eq:merit function} satisfy the following inequalities
            \begin{equation}\label{eq:merit function and dual gap}
                \frac{\mu}{L_1^2} m(z) \le G(x, y) \le \frac{L_1}{\mu^2} m(z).
            \end{equation}
        \end{proposition}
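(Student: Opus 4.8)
The plan is to establish the two-sided bound in \eqref{eq:merit function and dual gap} by relating the duality gap $G(x,y) = \max_{y'} g(x,y') - \min_{x'} g(x',y)$ to the norm of $F(z) = (\nabla_x g(x,y), -\nabla_y g(x,y))$, exploiting both the $\mu$-strong convexity--concavity (Assumption \ref{ass:strong convexity concavity}) and the $L_1$-Lipschitzness of $\nabla g$ (Assumption \ref{ass:1-lipschitz}). Fix $z = (x,y)$ and let $x_*(y) := \arg\min_{x'} g(x',y)$ and $y_*(x) := \arg\max_{y'} g(x,y')$, which exist and are unique by strong convexity/concavity. Then $G(x,y) = \big(g(x,y_*(x)) - g(x,y)\big) + \big(g(x,y) - g(x_*(y),y)\big)$, so it suffices to bound each of the two nonnegative summands separately and then add; by symmetry it is enough to treat, say, $g(x,y) - g(x_*(y),y)$, i.e. the ``$x$-gap'' of the $\mu$-strongly convex function $x' \mapsto g(x',y)$ whose gradient at $x$ is $\nabla_x g(x,y)$.

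For the \textbf{upper bound}, I would use the standard estimate that for a $\mu$-strongly convex function $\phi$ with minimizer $x_*$ one has $\phi(x) - \phi(x_*) \le \frac{1}{2\mu}\|\nabla\phi(x)\|_2^2$ (this follows from minimizing both sides of the strong convexity inequality $\phi(x_*) \ge \phi(x) + \langle \nabla\phi(x), x_* - x\rangle + \tfrac{\mu}{2}\|x_* - x\|_2^2$ over $x_*$). Applying this to $\phi = g(\cdot,y)$ and to $-g(x,\cdot)$ gives $g(x,y) - g(x_*(y),y) \le \frac{1}{2\mu}\|\nabla_x g(x,y)\|_2^2$ and $g(x,y_*(x)) - g(x,y) \le \frac{1}{2\mu}\|\nabla_y g(x,y)\|_2^2$; summing yields $G(x,y) \le \frac{1}{2\mu}\|F(z)\|_2^2 = \frac{1}{\mu} m(z)$. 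To get the sharper constant $\frac{L_1}{\mu^2}$ stated in the proposition one instead bounds $\phi(x) - \phi(x_*)$ by going through the distance to the optimum: $\|x - x_*(y)\|_2 \le \frac{1}{\mu}\|\nabla_x g(x,y)\|_2$ (strong monotonicity of $\nabla_x g(\cdot,y)$, using $\nabla_x g(x_*(y),y)=0$), and then $\phi(x) - \phi(x_*) \le \langle \nabla\phi(x), x - x_*\rangle \le \|\nabla_x g(x,y)\|_2 \cdot \|x - x_*(y)\|_2$, plus a further use of $L_1$-Lipschitzness to replace one factor of the distance; combining the pieces so that the total comes out to $\frac{L_1}{\mu^2}\big(\|\nabla_x g\|_2^2 + \|\nabla_y g\|_2^2\big)/2$ gives the claimed $G(x,y) \le \frac{L_1}{\mu^2} m(z)$.

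For the \textbf{lower bound}, I would argue in the reverse direction: since $\nabla g$ is $L_1$-Lipschitz, $x' \mapsto g(x',y)$ has $L_1$-Lipschitz gradient, hence $g(x,y) - g(x_*(y),y) \ge \frac{1}{2L_1}\|\nabla_x g(x,y)\|_2^2$ (the classical ``gradient domination from below'' for $L$-smooth functions, obtained by minimizing the descent-lemma upper bound $g(x_*(y),y) \le g(x,y) - \frac{1}{2L_1}\|\nabla_x g(x,y)\|_2^2$). Symmetrically $g(x,y_*(x)) - g(x,y) \ge \frac{1}{2L_1}\|\nabla_y g(x,y)\|_2^2$, and summing gives $G(x,y) \ge \frac{1}{2L_1}\|F(z)\|_2^2 = \frac{1}{L_1} m(z)$. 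Again the stated constant $\frac{\mu}{L_1^2}$ is obtained by routing through the distance: $\|x - x_*(y)\|_2 \ge \frac{1}{L_1}\|\nabla_x g(x,y)\|_2$ (from $L_1$-Lipschitzness and $\nabla_x g(x_*(y),y)=0$), combined with $g(x,y) - g(x_*(y),y) \ge \frac{\mu}{2}\|x - x_*(y)\|_2^2$ from strong convexity, yielding $g(x,y) - g(x_*(y),y) \ge \frac{\mu}{2L_1^2}\|\nabla_x g(x,y)\|_2^2$; summing the $x$- and $y$-parts gives $G(x,y) \ge \frac{\mu}{L_1^2} m(z)$.

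The main obstacle is bookkeeping the constants so that they match \eqref{eq:merit function and dual gap} exactly rather than the weaker $\frac{1}{\mu}$ and $\frac{1}{L_1}$ that come out of the one-line arguments: one has to consistently bound the suboptimality gap of a one-variable strongly convex / smooth function by $\langle \text{gradient}, \text{displacement}\rangle$ and then trade one factor of the displacement against the gradient via the complementary ($L_1$ or $\mu$) inequality, being careful that $\mu \le L_1$ so the bound $\frac{\mu}{L_1^2}\le \frac{1}{L_1}\le\frac{1}{\mu}\le\frac{L_1}{\mu^2}$ is internally consistent. Everything else is a direct application of the textbook inequalities for strongly convex and $L$-smooth functions, applied separately in the $x$ and $y$ blocks and added. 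Since this is Proposition~2.5 of \cite{huang2020cubic}, I would also simply cite that source for the precise constants if a self-contained derivation threatens to become lengthy.
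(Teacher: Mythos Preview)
The paper does not prove this proposition at all: it is quoted verbatim from \cite{huang2020cubic} and used as a black box, so there is no ``paper's own proof'' to compare against. Your outline is correct and is the standard derivation; in particular your lower-bound argument (strong convexity $\Rightarrow$ gap $\ge \tfrac{\mu}{2}\|x-x_*(y)\|_2^2$, then $L_1$-Lipschitzness $\Rightarrow \|x-x_*(y)\|_2 \ge \tfrac{1}{L_1}\|\nabla_x g(x,y)\|_2$) gives exactly $\tfrac{\mu}{L_1^2}m(z)$ after summing the $x$ and $y$ parts.

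For the upper bound your write-up is a bit hazy (``plus a further use of $L_1$-Lipschitzness to replace one factor of the distance''). The clean way to land on $\tfrac{L_1}{\mu^2}$ is to mirror the lower-bound argument: use $L_1$-smoothness of $g(\cdot,y)$ to get $g(x,y)-g(x_*(y),y)\le \tfrac{L_1}{2}\|x-x_*(y)\|_2^2$, then strong monotonicity to get $\|x-x_*(y)\|_2\le \tfrac{1}{\mu}\|\nabla_x g(x,y)\|_2$, and combine to obtain $g(x,y)-g(x_*(y),y)\le \tfrac{L_1}{2\mu^2}\|\nabla_x g(x,y)\|_2^2$; summing with the analogous $y$-estimate gives exactly $\tfrac{L_1}{\mu^2}m(z)$. (Your intermediate $\tfrac{1}{\mu}m(z)$ and $\tfrac{1}{L_1}m(z)$ bounds are of course sharper, but the proposition as stated records the looser constants.) One small caveat: Assumption~\ref{ass:1-lipschitz} gives $L_1$-Lipschitzness of $\nabla g$ jointly in $(x,y)$, which immediately implies the partial $L_1$-Lipschitzness in $x$ (resp.\ $y$) that you use, so no extra hypothesis is needed.
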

        
        The next theorem proves local quadratic convergence of the Algorithm \ref{alg:crn-spp}, and it is based on Theorem 3.6 from \cite{huang2020cubic}.
        \begin{theorem}[Theorem 3.6 from \cite{huang2020cubic}]\label{th:crn-spp local quadratic conv}
            Suppose $F: \mathcal Z \to \mathbb R^n$ is $\mu$-strongly monotone, first and second order Lipschitz operator (assumptions \ref{ass:strong convexity concavity}, \ref{ass:1-lipschitz} and \ref{ass:2-lipschitz} hold).
            Let $\{z_k\}$ be generated by Algorithm \ref{alg:crn-spp} with $\bar \gamma = \frac{L_2 \mu^2}{2 L^2}$, $\xi = \max \left\{ 1, \frac{L_1}{\mu} \right\}$ and
            \begin{equation}\label{eq:crn-spp quadratic conv area}
                z_0: \|z_0 - z^*\|_2 \le \frac{\mu}{L_2 \xi}.
            \end{equation}
            Then
            \begin{equation}\label{eq:crn-spp quadratic conv}
                \forall k \ge 0\ \|z_{k + 1} - z^*\|_2 \le \frac{L_2 \xi}{\mu} \|z_k - z^*\|_2^2, 
            \end{equation}
        \end{theorem}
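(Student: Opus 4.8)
The proof follows the standard cubic‑regularized Newton template, adapted to the operator/min–max setting of \eqref{eq:g to F}; here is the route I would take. The argument is an induction on $k$ whose hypothesis is that $z_k$ stays in the basin $\|z_k - z^*\|_2 \le \frac{\mu}{L_2 \xi}$ — a property that \eqref{eq:crn-spp quadratic conv} self‑propagates, since then $\frac{L_2 \xi}{\mu}\|z_k - z^*\|_2 \le 1$. The pivotal object is the merit function $m(z) = \tfrac12\|F(z)\|_2^2$ of \eqref{eq:merit function} together with the fact that $z^*$ is the unique zero of $F$: a bound on the residual $\|F(z_{k+1})\|_2$ converts into \eqref{eq:crn-spp quadratic conv} through $\mu$‑strong monotonicity, because $\mu\|z_{k+1} - z^*\|_2 \le \la F(z_{k+1}) - F(z^*), z_{k+1} - z^* \ra / \|z_{k+1} - z^*\|_2 \le \|F(z_{k+1})\|_2$.

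First I would write the first‑order optimality conditions of the cubic subproblem $\min_x \max_y g_k(x, y, \gamma_k)$. In the operator notation \eqref{eq:g to F} these say exactly that the step $d_k = \tilde z_{k+1} - z_k$ solves the regularized Newton system $F(z_k) + \nabla F(z_k)\, d_k + \gamma_k c_k = 0$, where $c_k$ is the block vector with entries $\|\tilde x_{k+1} - x_k\|_2 (\tilde x_{k+1} - x_k)$ and $\|\tilde y_{k+1} - y_k\|_2 (\tilde y_{k+1} - y_k)$, so that $\|c_k\|_2 \le \|d_k\|_2^2$. The second‑order Lipschitz Assumption~\ref{ass:2-lipschitz} supplies the Taylor remainder bound $\|F(\tilde z_{k+1}) - F(z_k) - \nabla F(z_k)\, d_k\|_2 \le \tfrac{L_2}{2}\|d_k\|_2^2$, and adding the optimality identity gives $\|F(\tilde z_{k+1})\|_2 \le \bigl(\tfrac{L_2}{2} + \gamma_k\bigr)\|d_k\|_2^2$. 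Since Algorithm~\ref{alg:crn-spp} sets $z_{k+1}$ to whichever of $z_k + \alpha d_k$ and $z_k + d_k = \tilde z_{k+1}$ has the smaller merit value, $\|F(z_{k+1})\|_2 \le \|F(\tilde z_{k+1})\|_2$, so this bound holds for $z_{k+1}$ regardless of the branch taken.

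It then remains to assemble three pieces. First, the inner \textbf{while}‑loop terminates: $L_2$‑smoothness forces $\|d_k\|_2 \to 0$ as $\gamma_k$ is multiplied up, so the test $\gamma_k(\|\tilde x_{k+1} - x_k\|_2 + \|\tilde y_{k+1} - y_k\|_2) \le \mu$ eventually passes, and at exit one has $\gamma_k \le \bar\gamma$ and $\gamma_k \|d_k\|_2 \le \mu$. Second, $\mu$‑strong monotonicity of \eqref{eq:g to F} makes $\nabla F(z_k)$ invertible with $\|\nabla F(z_k)^{-1}\|_2 \le 1/\mu$: its symmetric part is the block‑diagonal matrix $\mathrm{diag}\bigl(\nabla^2_{xx} g(z_k),\, -\nabla^2_{yy} g(z_k)\bigr) \succeq \mu I$, the off‑diagonal Hessian blocks cancelling in $v^\top \nabla F(z_k) v$. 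Third, one bootstraps a linear bound $\|d_k\|_2 \le 2\|z_k - z^*\|_2$ inside the basin from $\|d_k\|_2 \le \|z_k - z^*\|_2 + \|\tilde z_{k+1} - z^*\|_2$ together with the quadratic estimate on $\|\tilde z_{k+1} - z^*\|_2$ just derived. Chaining the residual bound, $\gamma_k \le \bar\gamma = \frac{L_2 \mu^2}{2 L_1^2} \le \frac{L_2}{2}$ (note $\mu \le L_1$ always, reading $L$ in the theorem as $L_1$), the strong‑monotonicity conversion, and the linear bound on $\|d_k\|_2$ then yields $\|z_{k+1} - z^*\|_2 = O\!\left(\tfrac{L_2}{\mu}\|z_k - z^*\|_2^2\right)$. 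I expect the main obstacle to be sharpening this constant to exactly $\xi = \max\{1, L_1/\mu\}$: that requires a tight estimate of $\|d_k\|_2$ (and of $\gamma_k\|d_k\|_2$) in the quadratic region rather than the crude factor $2$ above, and, together with the verification that the two nested loops terminate, it is the only delicate part — the rest is bookkeeping as in Theorem~3.6 of \cite{huang2020cubic}.
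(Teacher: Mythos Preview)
Your route is sound but genuinely different from the paper's. The paper does not go through the residual $\|F(\cdot)\|_2$ at all: it simply quotes from \cite{huang2020cubic} two separate distance estimates, one for each branch of Algorithm~\ref{alg:crn-spp}, namely $\|\tilde z_{k+1}-z^*\|_2\le\frac{L_2}{\mu}\|z_k-z^*\|_2^2$ for the full step $z_k+d_k$ and $\|\hat z_{k+1}-z^*\|_2\le\frac{L_1 L_2}{\mu^2}\|z_k-z^*\|_2^2$ for the damped step $z_k+\alpha d_k$, and then observes that both are dominated by $\frac{L_2\xi}{\mu}\|z_k-z^*\|_2^2$; the basin radius \eqref{eq:crn-spp quadratic conv area} is read off from that constant. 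So in the paper the factor $\xi=\max\{1,L_1/\mu\}$ is exactly the price of the damped branch. Your idea to exploit $m(z_{k+1})\le m(\tilde z_{k+1})$ is nice because it collapses the two cases into the full-step residual bound and in principle avoids paying $\xi$ there; the cost is that $\xi$ (or some absolute constant) must then reappear when you control $\|d_k\|_2$ in terms of $\|z_k-z^*\|_2$, which is precisely the step you flag as delicate. Your bootstrap $\|d_k\|_2\le\|z_k-z^*\|_2+\frac{L_2}{\mu}\|d_k\|_2^2$ does not close at the stated basin radius $\frac{\mu}{L_2\xi}$ (plugging in the a~priori bound $\|d_k\|_2\le\frac{L_1}{\mu}\|z_k-z^*\|_2$ only gives $\frac{L_2}{\mu}\|d_k\|_2\le 1$, which is vacuous), so you would end up with quadratic convergence under a slightly different constant and basin than the theorem states. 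One small slip: in Algorithm~\ref{alg:crn-spp} the inner loop multiplies $\gamma_k$ by $\rho\in(0,1)$, so $\gamma_k$ is driven \emph{down}, not up; the exit condition $\gamma_k(\|\tilde x_{k+1}-x_k\|_2+\|\tilde y_{k+1}-y_k\|_2)\le\mu$ is met because $\gamma_k\to 0$ while $d_k$ tends to the (bounded) Newton step, and the conclusion $\gamma_k\le\bar\gamma$ is automatic since $\bar\gamma$ is the starting value.
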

        \begin{proof}
            Here we provide only the modified part of its proof. The rest of it can be found in \cite{huang2020cubic}.
            
            If $z_{k + 1} = \tilde z_{k + 1} = z_k + d_k$, then
            \[
                \|z_{k + 1} - z^*\|_2 = \|\tilde z_{k + 1} - z^*\|_2 \le \frac{L_2}{\mu} \|z^k - z^*\|^2_2 \le \frac{L_2 \xi}{\mu} \|z_k - z^*\|_2^2.
            \]
            Else if $z_{k + 1} = \hat z_{k + 1} = z_k + \alpha d_k$, then
            \[
                \|z_{k + 1} - z^*\|_2 = \|\hat z_{k + 1} - z^*\|_2 \le \frac{L_1 L_2}{\mu^2} \|z^k - z^*\|_2^2 \le \frac{L_2 \xi}{\mu} \|z^k - z^*\|_2^2.
            \]
            Hence, we get \eqref{eq:crn-spp quadratic conv}.
            
            Now we need to find the area, where \eqref{eq:crn-spp quadratic conv} works:
            \begin{gather*}
                \exists c: \forall k \ge 0: \|z_k - z^*\|_2 \le c \Rightarrow \|z_{k + 1} - z^*\|_2 \le \frac{L_2 \xi}{\mu} \|z_k - z^*\|_2^2 \\
                \Leftrightarrow \|z_{k + 1} - z^*\|_2 \le \frac{L_2 \xi}{\mu} \|z_k - z^*\|_2 \le \frac{L_2 \xi c^2}{\mu} = c \\
                \Leftrightarrow c = \frac{\mu}{L_2 \xi}.
            \end{gather*}
            Thus, we get \eqref{eq:crn-spp quadratic conv area}.
        \qed
        \end{proof}
        Our idea is to use Algorithm \ref{alg:restarted highordermirrorprox} until it reaches the area \eqref{eq:crn-spp quadratic conv area} and then switch to Algorithm \ref{alg:crn-spp}. Algorithm \ref{alg:restarted highordermirrorprox with crn-spp} provides the pseudocode of this idea. From Proposition \ref{prop:merit function and dual gap}, our Theorem \ref{th:restarted highordermirrorprox conv} and Theorem \ref{th:crn-spp local quadratic conv}, we obtain the complexity of Algorithm \ref{alg:restarted highordermirrorprox with crn-spp}.
        
        \begin{algorithm}[t]
        	\caption{Restarted HighOrderMirrorProx with local quadratic convergence}\label{alg:restarted highordermirrorprox with crn-spp}
        	\begin{algorithmic}[1]
        		\STATE \textbf{Input} $z_1 \in \mathcal Z, p \ge 1, 0 < \e_G < 1, R: R\ge \|z_1 - z^*\|_2$, $\rho \in (0, 1)$, $\alpha \in (0, 1)$.
        		
        		\STATE $\tilde z_{1} = z_1$
        		
                \FOR{$i \in [n]$, where $n = \left\lceil \log \frac{L_2 R \xi}{\mu} + 1 \right\rceil$}
                    \STATE Set $R_i = \frac{R}{2^{i - 1}}$
                
        		    \STATE Set $T_i = \bigg\lfloor \frac{R_i^2}{2} \bigg( \frac{64L_p}{p! \mu R_i} \bigg)^\frac{2}{p + 1} \bigg\rfloor$
        		    
        		    \STATE Run Algorithm \ref{alg:highordermirrorprox} with $\tilde z_{i}$, $p$,  $T_i$ as input
        		    \STATE $\tilde z_{i + 1} = \bar z_{T_i}$
            	\ENDFOR
                	
            	\STATE Run Algorithm \ref{alg:crn-spp} with $\tilde z_{i + 1}$, $\tilde \e = \frac{\mu^2 \e_G}{L}$, $\bar \gamma = \frac{L_2 \mu^2}{2 L_1^2}$, $\rho$, $\alpha$, $g$ as input
            	\RETURN $z_k$
        	\end{algorithmic}
        \end{algorithm}
        
        \begin{theorem}\label{th:restarted highordermirrorprox with local quadratic conv}
            Suppose $F: \mathbb R^n \times \mathbb{R}^m \to \mathbb R^n \times \mathbb{R}^m $, that is defined in \eqref{eq:g to F}, is $\mu$-strongly monotone, first, second and $p$-th order Lipschitz operator (all assumptions \ref{ass:strong convexity concavity}, \ref{ass:1-lipschitz}, \ref{ass:2-lipschitz}, \ref{ass:p-lipschitz} hold). Denote $R: R \geq \|z_1 - z^*\|_2$ and $\xi = \max \left\{ 1, \frac{L_1}{\mu} \right\}$. Then the complexity of Algorithm \ref{alg:restarted highordermirrorprox with crn-spp}  is
            \begin{equation}
                O \left( \left( \frac{L_p R^{p - 1}}{\mu} \right)^\frac{2}{p + 1} \log \frac{L_2 \xi R}{\mu} + \log \frac{\log \frac{L_1^3}{2 \mu^2 \e_G}}{\log \frac{L_1 L_2}{\mu^2}} \right).
            \end{equation}
        \end{theorem}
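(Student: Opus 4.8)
The plan is to split the iteration count of Algorithm~\ref{alg:restarted highordermirrorprox with crn-spp} into two phases — the restarted high-order Mirror-Prox phase and the cubic-regularized Newton phase — and bound each separately, then add them. The key observation is that the outer loop is run only until $R_n = R/2^{n-1}$ drops below the radius $\frac{\mu}{L_2\xi}$ of the local quadratic-convergence region \eqref{eq:crn-spp quadratic conv area}, so the number of restarts is $n = \lceil \log \frac{L_2 R \xi}{\mu} + 1\rceil$ rather than the $\lceil \frac12 \log\frac{\mu R^2}{\e_G}\rceil$ appearing in Theorem~\ref{th:restarted highordermirrorprox conv}.

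For the first phase I would repeat the telescoping argument from the proof of Theorem~\ref{th:restarted highordermirrorprox conv}: by \eqref{eq:th_1 eq_2} with the choice $T_i = \lfloor \frac{R_i^2}{2}\left(\frac{64 L_p}{p!\mu R_i}\right)^{2/(p+1)}\rfloor$ each restart halves the distance to $z^*$, so after $n$ restarts we reach a point $\tilde z_{n+1}$ with $\|\tilde z_{n+1}-z^*\|_2 \le R/2^n \le \frac{\mu}{L_2\xi}$, which lies in the region \eqref{eq:crn-spp quadratic conv area}. Summing the $T_i$ as in that proof gives a geometric series in $R_i^{2(p-1)/(p+1)}$ dominated by its first term, so the total number of first-phase iterations is $O\!\left(\left(\frac{L_p R^{p-1}}{\mu}\right)^{2/(p+1)} n\right) = O\!\left(\left(\frac{L_p R^{p-1}}{\mu}\right)^{2/(p+1)} \log\frac{L_2\xi R}{\mu}\right)$.

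For the second phase, once Algorithm~\ref{alg:crn-spp} starts from $\tilde z_{n+1}$ inside \eqref{eq:crn-spp quadratic conv area}, Theorem~\ref{th:crn-spp local quadratic conv} gives the quadratic rate \eqref{eq:crn-spp quadratic conv}, so the merit function $m(z_k)$ contracts doubly exponentially; using Proposition~\ref{prop:merit function and dual gap} to translate the target duality gap $\e_G$ into the merit-function tolerance $\tilde\e = \mu^2\e_G/L_1$ (and noting $\|z-z^*\|_2 \le \frac{1}{\mu}\|F(z)\|_2$ so that $m(z_0)\le \frac{L_1^2}{2}\cdot\frac{\mu^2}{L_2^2\xi^2}$ on entry), a standard $\log\log$ computation bounds the number of cubic-Newton steps by $O\!\left(\log\frac{\log\frac{L_1^3}{2\mu^2\e_G}}{\log\frac{L_1 L_2}{\mu^2}}\right)$. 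Adding the two phase counts yields the claimed complexity.

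The main obstacle I anticipate is bookkeeping the second-phase $\log\log$ term: one must carefully convert the quadratic contraction \eqref{eq:crn-spp quadratic conv} on $\|z_k-z^*\|_2$ into a contraction on $m(z_k)$, pin down the initial value of the merit function at the handoff point using the entry condition $\|\tilde z_{n+1}-z^*\|_2\le\frac{\mu}{L_2\xi}$ together with Assumption~\ref{ass:1-lipschitz}, and then solve the recursion $a_{k+1}\le c\,a_k^2$ for the first $k$ with $a_k\le\tilde\e$, which produces exactly the ratio of logarithms displayed in the theorem. The first-phase bound is essentially a reprise of Theorem~\ref{th:restarted highordermirrorprox conv} with the stopping index changed, so it should go through routinely; the only subtlety there is checking that the floor in the definition of $T_i$ still leaves enough iterations to guarantee the factor-two distance decrease, which follows because the bracketed quantity is at least $1$ in the regime of interest.
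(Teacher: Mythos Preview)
Your proposal is correct and follows essentially the same two-phase decomposition as the paper: first bound the number of restarts by $n=\lceil \log\frac{L_2 R\xi}{\mu}+1\rceil$ via the halving argument of Theorem~\ref{th:restarted highordermirrorprox conv}, then unroll the quadratic recursion \eqref{eq:crn-spp quadratic conv} starting from the entry condition \eqref{eq:crn-spp quadratic conv area}, convert to the merit function via Assumption~\ref{ass:1-lipschitz} and $F(z^*)=0$, and finish with Proposition~\ref{prop:merit function and dual gap} to obtain the $\log\log$ term. One small slip worth fixing: the parenthetical inequality $\|z-z^*\|_2\le\frac{1}{\mu}\|F(z)\|_2$ yields a \emph{lower} bound on $m(z_0)$, not the upper bound you need; the correct bound $m(z_0)\le\frac{L_1^2}{2}\|z_0-z^*\|_2^2$ comes straight from Assumption~\ref{ass:1-lipschitz}, exactly as you state in your final paragraph and as the paper does in \eqref{eq:tilde_e_residual}.
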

        
        \begin{proof}
            First of all, we need to find the number of restarts $n$ of Algorithm \ref{alg:restarted highordermirrorprox} to reach the area of local quadratic convergence of Algorithm \ref{alg:crn-spp} from \eqref{eq:crn-spp quadratic conv area}: $\|\tilde z_n - z^*\|_2 \le \frac{\mu}{L_2 \xi}.$ We can choose such $n$, that 
            \[
                \|\tilde z_n - z^*\|_2 \le R_n \le \frac{\mu}{L_2 \xi}.
            \]
            Therefore, the number of restarts is
            \[
                \frac{R}{2^{n - 1}} \le \frac{\mu}{L_2 \xi} \Leftrightarrow n = \left\lceil \log \frac{L_2 R \xi}{\mu} + 1 \right\rceil.
            \]
            Next we switch to Algorithm \ref{alg:crn-spp} and we need to obtain its number of iterations until convergence. 
            % For ease of notation we denote points, generated by this algorithm, without tilde $z_i$. 
            Denote by $\e'$ the accuracy of solution in terms of the merit function \eqref{eq:merit function}. Owing to first order Lipschitzness of $F(z)$ and the fact that $F(z^*) = 0$, we can get
            \begin{equation}\label{eq:tilde_e_residual}
                \e' = m(z_k) = \frac{1}{2} \| F(z_k) \|_2^2 = \frac{1}{2} \|F(z_k) - F(z^*) \|_2^2 \le \frac{L_1^2}{2} \|z_k - z^*\|_2^2.
            \end{equation}
            Now we establish a connection between the solution in terms of merit function $m(z)$ and the duality gap $G(x, y)$. From \eqref{eq:tilde_e_residual} and \eqref{eq:merit function and dual gap} we get the following:
            \begin{gather}
                \e_G = G(x, y) = \max_{y' \in \mathbb R^n} f(x, y') - \min_{x' \in \mathbb R^n} f(x', y) \le \frac{L_1}{\mu^2} m(z_k) = \frac{L_1}{\mu^2} \e' \notag \\ 
                \Leftrightarrow \frac{\mu^2 \e_G}{L_1} \le \e'. \label{eq:tilde_eps_and_eps}
            \end{gather}
            Then, from \eqref{eq:crn-spp quadratic conv}, \eqref{eq:crn-spp quadratic conv area}, \eqref{eq:tilde_e_residual} and \eqref{eq:tilde_eps_and_eps} we can obtain the needed number of iterations $k$
            \begin{gather*}
                \frac{\mu^2 \e_G}{L_1} \overset{\eqref{eq:tilde_e_residual}, \eqref{eq:tilde_eps_and_eps}}{\le} \frac{L_1^2}{2} \|z_k - z^*\|_2^2 \\
                \overset{\eqref{eq:crn-spp quadratic conv}}{\le} \frac{L_1^2}{2} \bigg( \frac{L_1 L_2}{\mu^2} \|z_{k - 1} - z^*\|_2^2 \bigg)^2 \le \frac{L_1^2}{2} \bigg( \frac{L_1 L_2}{\mu^2}
                \bigg( \frac{L_1 L_2}{\mu^2} \|z_{k - 2} - z^*\|_2^2 \bigg)^2 \bigg)^2 \le ... \\ 
                \le \frac{L_1^2}{2} \bigg( \frac{L_1 L_2}{\mu^2} \bigg)^{2^{k - 1} - 2} \|z_1 - z^*\|_2^{2^{k}} \overset{\eqref{eq:crn-spp quadratic conv area}}{\le} \frac{L_1^2}{2} \bigg( \frac{L_1 L_2}{\mu^2} \bigg)^{2^{k - 1} - 2} \bigg( \frac{\mu^2}{L_1 L_2} \bigg)^{2^k}  \\
                \Leftrightarrow \frac{2 \mu^2 \e_G}{L_1^3} \le \bigg( \frac{\mu^2}{L_1 L_2} \bigg)^{2^{k-1} + 2} \Leftrightarrow \log \frac{2 \mu^2 \e_G}{L_1^3} \le (2^{k - 1} + 2) \log \frac{\mu^2}{L_1 L_2}
            \end{gather*}
            Since $\log (\mu^2 / L_1 L_2) < 0$,
            \begin{equation*}
                \log \frac{2 \mu^2 \e_G}{L_1^3} \le 2^{k - 1} \log \frac{\mu^2}{L_1 L_2} \Leftrightarrow k = \Bigg\lceil \log \frac{\log \frac{L_1^3}{2 \mu^2 \e_G}}{\log \frac{L_1 L_2}{\mu^2}} \Bigg\rceil + 1.
            \end{equation*}
            
            Finally, the total number of iterations of Algorithm \ref{alg:restarted highordermirrorprox with crn-spp} is
            \begin{align*}
                N &= \sum_{i = 1}^n T_i + k \\
                &\le \left( \frac{64 L_p R^{p - 1}}{\mu} \right)^\frac{2}{p + 1} \left\lceil \log \frac{L_2 \xi R}{\mu} + 1 \right\rceil + \left\lceil \log \frac{L_2 \xi R}{\mu} + 1 \right\rceil + \left\lceil \log \frac{\log \frac{L_1^3}{2 \mu^2 \e_G}}{\log \frac{L_1 L_2}{\mu^2}} \right\rceil + 1 \\
                &= O \left( \left( \frac{L_p R^{p - 1}}{\mu} \right)^\frac{2}{p + 1} \log \frac{L_2 \xi R}{\mu} + \log \frac{\log \frac{L_1^3}{2 \mu^2 \e_G}}{\log \frac{L_1 L_2}{\mu^2}} \right)
            \end{align*}
            \qed
        \end{proof}
        
    \subsection{Gradient norm minimization}\label{sec:gradient norm}
    
        % In this subsection we analyze complexity of the Algorithm \ref{alg:restarted highordermirrorprox with crn-spp} in terms of the norm of the gradient $\|\nabla g(x, y)\|_2$.
        
        In this subsection we apply the framework from \cite{dvurechensky2019near} to Algorithm \ref{alg:restarted highordermirrorprox with crn-spp}, introduce Algorithm \ref{alg:restarted highordermirrorprox with crn-spp gradient norm} for problem \eqref{eq:saddle problem gradient norm approximate} and analyze its complexity in terms of the norm of the gradient $\|\nabla g(x, y)\|_2$.
        
        Firstly, we need to introduce some technical lemmas.
        \begin{lemma}
            If $g(x, y)$ is $p$-Lipchitz \eqref{eq:p-lipschitz}, then its partial $p$-th order derivatives are also Lipschitz.
            \begin{equation}\label{eq:objective partial p-lipschitz}
            \forall \hat x, x \in \R^n,\ \hat y, y \in \R^m \Rightarrow \|\nabla_{x^i y^{p - i}}^p g(\hat x, \hat y) - \nabla_{x^i y^{p - i}}^p g(x, y)\|_2 \le L_p \|\hat z - z\|_2.
            \end{equation}
        \end{lemma}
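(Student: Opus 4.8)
The plan is to derive the partial-derivative Lipschitz bound directly from the full-tensor Lipschitz condition \eqref{eq:p-lipschitz}, using the fact that a partial derivative is just the full $p$-th derivative tensor evaluated along coordinate directions. First I would fix points $z = (x,y)$ and $\hat z = (\hat x, \hat y)$ and observe that, by definition of the operator norm of a symmetric $p$-linear form, $\|\nabla^p g(\hat z) - \nabla^p g(z)\|_2 = \max_{\|h_1\|_2,\dots,\|h_p\|_2 \le 1} |(\nabla^p g(\hat z) - \nabla^p g(z))[h_1,\dots,h_p]|$. Choosing the $h_j$ to lie in the coordinate subspaces $\R^n \times \{0\}$ (for $i$ of them) or $\{0\} \times \R^m$ (for the remaining $p-i$) only restricts the feasible set in this maximum, so the maximum over that restricted set is no larger than the full operator norm.

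Next I would identify that restricted maximum with $\|\nabla^p_{x^i y^{p-i}} g(\hat z) - \nabla^p_{x^i y^{p-i}} g(z)\|_2$. The mixed partial derivative $\nabla^p_{x^i y^{p-i}} g$, viewed as a multilinear form on $(\R^n)^i \times (\R^m)^{p-i}$, is exactly the full tensor $\nabla^p g$ fed with $i$ arguments from the $x$-block and $p-i$ arguments from the $y$-block; hence its operator norm equals the maximum of $|(\nabla^p g(\hat z) - \nabla^p g(z))[h_1,\dots,h_p]|$ over unit vectors $h_1,\dots,h_i \in \R^n\times\{0\}$ and $h_{i+1},\dots,h_p \in \{0\}\times\R^m$. (Here one should note the harmless norm bookkeeping: a unit vector in the $x$-block, regarded as an element of $\R^n\times\R^m$, still has Euclidean norm $1$, so no dimensional constant appears.) Combining the two previous steps gives
\[
    \|\nabla^p_{x^i y^{p-i}} g(\hat z) - \nabla^p_{x^i y^{p-i}} g(z)\|_2 \le \|\nabla^p g(\hat z) - \nabla^p g(z)\|_2 \le L_p \|\hat z - z\|_2,
\]
where the last inequality is Assumption \ref{ass:p-lipschitz}, i.e. \eqref{eq:p-lipschitz}. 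Since $\hat x, x, \hat y, y$ were arbitrary, this is exactly \eqref{eq:objective partial p-lipschitz}.

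The only genuinely delicate point is the identification in the middle step: one must be careful that the partial derivative $\nabla^p_{x^i y^{p-i}} g$ and the coordinate-restricted full derivative are the same multilinear object, and that the operator norm on the partial derivative (as defined for $\R^n$- and $\R^m$-valued arguments) is consistent with taking the sup over unit vectors embedded in the product space. Everything else is a routine consequence of the definition of the tensor norm given in Section \ref{sec:preliminaries} together with the monotonicity of a maximum under shrinking its feasible set. I do not expect any sharp constant loss; the bound comes out with the same $L_p$ as in \eqref{eq:p-lipschitz}.
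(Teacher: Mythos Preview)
Your proposal is correct and follows essentially the same approach as the paper: the paper rewrites the partial derivative at $s\in\R^n$ as the full tensor evaluated at $\begin{pmatrix}s\\0\end{pmatrix}$, observes this is a restriction of the maximization defining $\|\nabla^p g(\hat z)-\nabla^p g(z)\|_2$, and applies \eqref{eq:p-lipschitz}. The only difference is cosmetic: the paper writes out only the case $i=p$ using the single-direction norm $[s]^p$ and declares the other cases analogous, whereas you treat general $i$ in one stroke via the multilinear norm $[h_1,\dots,h_p]$; both are covered by the equivalent norm definitions given in Section~\ref{sec:preliminaries}.
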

        \begin{proof}
            Here we provide proof only for $\nabla_{x...x}^p$. For other partial derivatives the proof is analogous.
            
            From definition of $\|\cdot\|_2$  \
            \begin{align*}
                \|\nabla_{x...x}^p g(\hat x, \hat y) - \nabla_{x...x}^p g(x, y)\|_2 &= \max_{\|s\|_2 \le 1} |(\nabla_{x...x}^p g(\hat x, \hat y) - \nabla_{x...x}^p g(x, y))[s]^p| \\
                &= \max_{\|s\|_2 \le 1} \Bigg| (\nabla^p g(\hat x, \hat y) - \nabla^p g(x, y))\bigg[ \begin{pmatrix} s \\ 0 \end{pmatrix} \bigg]^p \Bigg| \\
                &\le \max_{\|h\|_2 \le 1} |(\nabla^p g(\hat x, \hat y) - \nabla^p g(x, y))[h]^p| \\
                &= \|\nabla^p g(\hat x, \hat y) - \nabla^p g(x, y)\|_2 \le L_p \|\hat z - z\|_2.
            \end{align*}
            \qed
        \end{proof}
        
        \begin{lemma}\label{lemma:corollary from p-lipschitzness}
            Let $\nabla^p_{x...x} g(x, y)$ be Lipschitz \eqref{eq:objective partial p-lipschitz}. Then
            \begin{equation}\label{eq:corollary from p-lipschitzness}
                \forall n \in [p] \Rightarrow \|\nabla_{x...x}^{p - n} g(\hat z) - \nabla_{x...x}^{p - n} \Phi_{(x, y), p}(\hat z) \|_2 \le \frac{L_p (\sqrt{2})^{n}}{(n + 1)!} \|\hat z - z\|_2^{n + 1}.
            \end{equation}
        \end{lemma}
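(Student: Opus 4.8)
The plan is to reduce \eqref{eq:corollary from p-lipschitzness} to the classical Taylor remainder estimate applied to the tensor-valued map $f\colon \hat z\mapsto \nabla_{x\dots x}^{p-n} g(\hat z)$. Since $g$ is $p$-times differentiable, $f$ is $n$-times differentiable. Because $\nabla_{x\dots x}^{p-n}g(\hat z) - \nabla_{x\dots x}^{p-n}\Phi_{(x,y),p}(\hat z)$ is a symmetric $(p-n)$-linear form in the $x$-block, its norm equals $\max_{\|s\|_2\le 1}|\phi_s(\hat z) - \Phi^{\phi_s}_{z,n}(\hat z)|$, where $\phi_s(\hat z):=\nabla_{x\dots x}^{p-n}g(\hat z)[s]^{p-n}$ is scalar and $\Phi^{\phi_s}_{z,n}$ denotes its degree-$n$ Taylor polynomial about $z$; so it suffices to bound $|\phi_s(\hat z) - \Phi^{\phi_s}_{z,n}(\hat z)|$ uniformly over unit $s$ in the $x$-block.

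First I would verify the algebraic identity $\nabla_{x\dots x}^{p-n}\Phi_{(x,y),p}(\hat z) = \Phi^f_{z,n}(\hat z)$, i.e. that taking $p-n$ partial $x$-derivatives of the order-$p$ Taylor polynomial of $g$ yields the order-$n$ Taylor polynomial of $f$: differentiating $\sum_{i=0}^p\frac1{i!}\nabla^i g(z)[\hat z-z]^i$ term by term kills the terms with $i<p-n$ and turns the term of degree $i\ge p-n$ into $\frac1{(i-(p-n))!}\nabla^i g(z)[(\cdot,0)^{p-n},(\hat z-z)^{i-(p-n)}]$, which after reindexing $j=i-(p-n)$ is exactly the degree-$j$ term of $\Phi^f_{z,n}$. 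Next I would bound the Lipschitz constant of $\nabla^n f$: for unit directions $h_i=(a_i,b_i)$ (split into $x$- and $y$-blocks), $\nabla^n f(\hat z)[h_1,\dots,h_n]$ is $\nabla^p g(\hat z)$ with $p-n$ slots filled by pure $x$-directions and the remaining $n$ slots by the $h_i$; expanding each $h_i$ by multilinearity gives a sum over subsets $S\subseteq\{1,\dots,n\}$ of pure partial derivatives $\nabla_{x^{p-n+|S|}y^{n-|S|}}^{p}g$, so by \eqref{eq:objective partial p-lipschitz} the increment of $\nabla^n f$ is at most $L_p\|\hat z - z\|_2\sum_{S}\prod_{i\in S}\|a_i\|_2\prod_{i\notin S}\|b_i\|_2 = L_p\|\hat z-z\|_2\prod_{i=1}^n(\|a_i\|_2+\|b_i\|_2)\le (\sqrt2)^n L_p\|\hat z - z\|_2$, using $\|a_i\|_2+\|b_i\|_2\le\sqrt2\,\|h_i\|_2\le\sqrt2$.

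Finally I would invoke the standard remainder bound: from the integral form $\phi_s(\hat z)-\Phi^{\phi_s}_{z,n}(\hat z)=\int_0^1\frac{(1-\tau)^{n-1}}{(n-1)!}\big(\nabla^n\phi_s(z+\tau(\hat z-z))-\nabla^n\phi_s(z)\big)[\hat z-z]^n\,d\tau$, the $(\sqrt2)^nL_p$-Lipschitzness of $\nabla^n\phi_s$ and $\int_0^1(1-\tau)^{n-1}\tau\,d\tau=\frac1{n(n+1)}$ give $|\phi_s(\hat z)-\Phi^{\phi_s}_{z,n}(\hat z)|\le\frac{(\sqrt2)^nL_p}{(n+1)!}\|\hat z-z\|_2^{n+1}$, and taking the maximum over $\|s\|_2\le1$ yields \eqref{eq:corollary from p-lipschitzness}. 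I expect the main obstacle to be the first step of the middle paragraph — cleanly checking that $p-n$ partial $x$-differentiations of the degree-$p$ Taylor polynomial of $g$ reproduce the degree-$n$ Taylor polynomial of $\nabla_{x\dots x}^{p-n}g$, together with keeping the tensor-norm and scalarization bookkeeping consistent; once that identity and the $(\sqrt2)^n$ Lipschitz bound are established, the remainder estimate is entirely routine.
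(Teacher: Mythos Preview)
Your argument is correct and takes a genuinely different route from the paper. The paper proves \eqref{eq:corollary from p-lipschitzness} by induction on $n$: the base case $n=1$ is handled by writing the integral remainder for $f=\nabla_{x\dots x}^{p-1}g$, splitting $\nabla f$ into its $x$- and $y$-blocks, applying \eqref{eq:objective partial p-lipschitz} to each block, and collecting a single factor $\sqrt2$ from the Euclidean recombination; the inductive step repeats this manoeuvre, so that each level of the induction contributes one more $\sqrt2$. You instead bypass the induction entirely: after the algebraic identity $\nabla_{x\dots x}^{p-n}\Phi_{(x,y),p}=\Phi^{f}_{z,n}$, you compute the Lipschitz constant of $\nabla^n\phi_s$ in one shot by the multilinear expansion over subsets $S\subseteq\{1,\dots,n\}$, which makes the origin of the $(\sqrt2)^n$ factor completely transparent (it is $\prod_i(\|a_i\|_2+\|b_i\|_2)\le(\sqrt2)^n$), and then invoke the standard integral remainder with $\int_0^1(1-\tau)^{n-1}\tau\,d\tau=1/(n(n+1))$ once. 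Your approach is cleaner and more conceptual—no bookkeeping across levels and an explicit combinatorial source for the constant—while the paper's induction is slightly more elementary in that it never needs the full $2^n$-term expansion. Note that both proofs rely on \eqref{eq:objective partial p-lipschitz} for \emph{all} mixed partials $\nabla^p_{x^iy^{p-i}}g$, not only the pure $x$-partial, which is consistent with how the lemma's hypothesis is stated.
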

        \begin{proof}
            We prove this by induction. 
            
            The base of induction $n = 1$ follows from the definition of Taylor approximation. Denote $f(z) = \nabla_{x...x}^{p-1} g(z)$.
            \begin{gather*}
                \|\nabla^{p - 1}_{x...x} g(\hat z) - \nabla^{p - 1}_{x...x} \Phi_{(x, y), p}(\hat z)\|_2 \\
                =\|\nabla^{p - 1}_{x...x} g(\hat z) - \nabla^{p - 1}_{x...x} g(z) - \nabla^{p}_{x...xx} g(z)[\hat x - x] - \nabla^{p}_{x...xy} g(z)[\hat y - y]\|_2 \\
                = \|f(\hat z) - f(z) - \nabla f(z)[\hat z - z]\|_2 \\
                = \|\int_0^1 \la \nabla f(z + \tau(\hat z - z)) - \nabla f(z); \hat z - z \ra d \tau \|_2 \\
                \le \int_0^1 \bigg\| 
                \begin{pmatrix} 
                    \nabla_{x...xx}^p g(z + \tau (\hat z - z)) \\ 
                    \nabla_{x...xy}^p g(z + \tau (\hat z - z)) 
                \end{pmatrix} 
                - 
                \begin{pmatrix} 
                    \nabla_{x...xx}^p g(z) \\ 
                    \nabla_{x...xy}^p g(z)) 
                \end{pmatrix} \bigg\|_2 
                \| \hat z - z \|_2 d \tau \\
                = \int_0^1 \sqrt{\|\nabla^p_{x...xx} g(z + \tau (\hat z - z)) - \nabla^p_{x...xx} g(z)\|_2^2 + \|\nabla^p_{x...xy} g(z + \tau (\hat z - z)) - \nabla^p_{x...xy} g(z)\|_2^2} \cdot \\
                \cdot \|\hat z - z\|_2 d \tau \\ 
                \overset{\eqref{eq:objective partial p-lipschitz}}{\le} \sqrt 2 L_p \|\hat z - z\|_2^2 \int_0^1 \tau d \tau = \frac{L_p \sqrt 2}{2} \|\hat z - z\|_2^2.
            \end{gather*}
            
            Now assume it holds for $n = p - 1$:
            \begin{gather}
                \|\nabla_x g(\hat z) - \nabla_x \Phi_{(x, y), p}(\hat z) \|_2 \notag \\
                = \bigg\| \nabla_x g(\hat z) - \nabla_x g(z) - (\nabla_{xx}^2 g(z)[\hat x - x] - \nabla_{xy}^2 g(z)[\hat y - y])- ... - \notag \\
                - \nabla_x \bigg(\frac{1}{p!} \nabla^p g(z)[\hat z - z]^p \bigg) \bigg\|_2 \notag \\
                \le \frac{L_p (\sqrt{2})^{p - 1}}{p!} \|\hat z - z\|_2^{p}. \label{eq:n = p - 1}
            \end{gather}
            And consider $n = p$
            \begin{gather*}
                |g(\hat z) - \Phi_{(x, y), p}(\hat z)| \\
                = |g(\hat z) - g(z) - \nabla_x g(z)[\hat x - x] - \nabla_y g(z)[\hat y - y] - ... -  \frac{1}{p!}\nabla^p g(z) [\hat z - z]^p| \\
                \le \int_0^1
                \bigg\|
                \begin{pmatrix}
                    \nabla_x g(z + \tau(\hat z - z)) \\
                    \nabla_y g(z + \tau(\hat z - z))
                \end{pmatrix}
                - 
                \begin{pmatrix}
                    \nabla_x g(z) \\
                    \nabla_y g(z)
                \end{pmatrix}
                - \\
                - \tau
                \begin{pmatrix}
                    \nabla^2_{xx} g(z)[\hat x - x] + \nabla^2_{xy} g(z)[\hat y - y] \\
                    \nabla^2_{yx} g(z)[\hat x - x] + \nabla^2_{yy} g(z)[\hat y - y]
                \end{pmatrix}
                - ... - \\
                - \frac{\tau^{p - 1}}{p!}
                \begin{pmatrix}
                    \nabla_x (\nabla^p g(z) [\hat z - z]^p) \\
                    \nabla_y (\nabla^p g(z) [\hat z - z]^p)
                \end{pmatrix}
                \bigg\|_2 \|\hat z - z\|_2 d \tau \\
                = \int_0^1 \Big(\|\nabla_x g(z + \tau(\hat z - z)) - \nabla_x g(z) - \\
                - \tau(\nabla^2_{xx} g(z)[\hat x - x] + \nabla^2_{xy} g(z)[\hat y - y]) - ... - \\
                - \frac{\tau^{p - 1}}{p!} \nabla_x (\nabla^p g(z) [\hat z - z]^p)\|_2^2 + \\
                + \|\nabla_y g(z + \tau(\hat z - z)) - \nabla_y g(z)- \\
                - \tau(\nabla^2_{yx} g(z)[\hat x - x] + \nabla^2_{yy} g(z)[\hat y - y]) - ... - \\ 
                - \frac{\tau^{p - 1}}{p!} \nabla_y (\nabla^p g(z) [\hat z - z]^p)\|_2^2 \Big)^{1/2} \|\hat z - z\|_2 d \tau.
            \end{gather*}
            If we denote $\hat z = z + \tau(\hat z - z)$ in \eqref{eq:n = p - 1}, each of two factors under the square root is indeed what we had for $n = p - 1$. Finally,
            \begin{align*}
                \|\nabla_x g(\hat z) - \nabla_x \Phi_{(x, y), p}(\hat z) \|_2 &\le \sqrt 2 \frac{L_p (\sqrt 2)^{p - 1}}{p!} \|\hat z - z\|_2^{p + 1} \int_0^1 \tau^p d \tau \\
                &= \frac{L_p (\sqrt 2)^{p}}{(p  + 1)!} \|\hat z - z\|_2^{p + 1}.
            \end{align*}
            
            For any other partial derivative in \eqref{eq:corollary from p-lipschitzness} the result is the same and can be obtained in a similar way.
            \qed
        \end{proof}
        
        The next lemma is a modified version of Lemma 5.2 from \cite{grapiglia2019tensor} for SPP.
        
        \begin{lemma}[Lemma 5.2 from \cite{grapiglia2019tensor}]
            Let $(\tilde x, \tilde y) = T_{p,M}^g \left( x, y \right),\ p \ge 2$, where $M \ge \sqrt 2 p L_p > \frac{1}{\sqrt 2} p L_p$ and assumption \ref{ass:p-lipschitz} hold. Then
            \begin{equation}\label{eq:gradient norm and objective residual}
                \|\nabla g(\tilde x, \tilde y)\|_2^\frac{p + 1}{p} \frac{M^\frac{3p + 1}{2p}}{2^\frac{2p^2 + p + 1}{2p} p (p + 1)!} \le g(x, \tilde y) - g(\tilde x, y).
            \end{equation}
        \end{lemma}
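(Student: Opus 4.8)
The plan is to adapt the proof of Lemma~5.2 in \cite{grapiglia2019tensor} to the saddle setting: first bound $\|\nabla g(\tilde x,\tilde y)\|_2$ from above and $g(x,\tilde y)-g(\tilde x,y)$ from below, each in terms of the step length, and then eliminate the step length between the two estimates. Throughout write $z=(x,y)$, $\tilde z=(\tilde x,\tilde y)$, $r_x=\|\tilde x-x\|_2$, $r_y=\|\tilde y-y\|_2$ and $r=\|\tilde z-z\|_2$.

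First I would use that $(\tilde x,\tilde y)$ is the (interior) min--max point of $\Omega_{(x,y),p,M}$, so both partial gradients of the model vanish there. Differentiating the regularizers $\pm\frac{M(\sqrt2)^{p-1}}{(p+1)!}\|\cdot\|_2^{p+1}$ gives $\|\nabla_x\Phi_{(x,y),p}(\tilde z)\|_2=\frac{M(\sqrt2)^{p-1}}{p!}r_x^{p}$ and $\|\nabla_y\Phi_{(x,y),p}(\tilde z)\|_2=\frac{M(\sqrt2)^{p-1}}{p!}r_y^{p}$; using $r_x^{2p}+r_y^{2p}\le(r_x^2+r_y^2)^{p}$ to recombine the two blocks yields $\|\nabla\Phi_{(x,y),p}(\tilde z)\|_2\le\frac{M(\sqrt2)^{p-1}}{p!}r^{p}$. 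Lemma~\ref{lemma:corollary from p-lipschitzness} with $n=p-1$, applied to $\nabla_x g$ and, by the analogous statement, to $\nabla_y g$, bounds each partial Taylor remainder, hence $\|\nabla g(\tilde z)-\nabla\Phi_{(x,y),p}(\tilde z)\|_2\le\frac{L_p(\sqrt2)^{p}}{p!}r^{p}$. Adding these two estimates and using $M\ge\sqrt2\,pL_p$ to let $L_p$ be dominated by $M$ gives an inequality of the form $\|\nabla g(\tilde x,\tilde y)\|_2\le A\,r^{p}$ with an explicit constant $A$.

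Second, I would exploit the saddle inequalities $\Omega_{(x,y),p,M}(\tilde x,y)\le\Omega_{(x,y),p,M}(\tilde x,\tilde y)\le\Omega_{(x,y),p,M}(x,\tilde y)$. Since the $x$-regularizer vanishes at $\hat x=x$ and the $y$-regularizer at $\hat y=y$, the outer two expressions are $\Phi_{(x,y),p}(\tilde x,y)+\frac{M(\sqrt2)^{p-1}}{(p+1)!}r_x^{p+1}$ and $\Phi_{(x,y),p}(x,\tilde y)-\frac{M(\sqrt2)^{p-1}}{(p+1)!}r_y^{p+1}$, so that $\frac{M(\sqrt2)^{p-1}}{(p+1)!}\bigl(r_x^{p+1}+r_y^{p+1}\bigr)\le\Phi_{(x,y),p}(x,\tilde y)-\Phi_{(x,y),p}(\tilde x,y)$. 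Replacing $\Phi_{(x,y),p}$ by $g$ on the right via Lemma~\ref{lemma:corollary from p-lipschitzness} with $n=p$ (which at the points $(x,\tilde y)$ and $(\tilde x,y)$, lying at distance $r_y$ and $r_x$ from $z$, costs $\frac{L_p(\sqrt2)^{p}}{(p+1)!}(r_x^{p+1}+r_y^{p+1})$ of error), and invoking $M>\frac1{\sqrt2}pL_p$ together with $p\ge2$ to keep the resulting coefficient strictly positive, gives $g(x,\tilde y)-g(\tilde x,y)\ge\frac{(\sqrt2)^{p-1}(M-\sqrt2 L_p)}{(p+1)!}\bigl(r_x^{p+1}+r_y^{p+1}\bigr)$. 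The power-mean inequality $r_x^{p+1}+r_y^{p+1}\ge2^{(1-p)/2}r^{p+1}$ then turns this into $g(x,\tilde y)-g(\tilde x,y)\ge B\,r^{p+1}$ with an explicit constant $B$.

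Finally I would eliminate $r$: from $\|\nabla g(\tilde z)\|_2\le A r^{p}$ we get $r\ge\bigl(\|\nabla g(\tilde z)\|_2/A\bigr)^{1/p}$, and substituting into $g(x,\tilde y)-g(\tilde x,y)\ge B r^{p+1}$ yields $g(x,\tilde y)-g(\tilde x,y)\ge\bigl(B/A^{(p+1)/p}\bigr)\,\|\nabla g(\tilde x,\tilde y)\|_2^{(p+1)/p}$; inserting the explicit $A$ and $B$ and simplifying the powers of $2$, $p!$, $(p+1)!$ and $M$ produces the stated inequality. The two preliminary estimates are immediate from the stationarity conditions and from Lemma~\ref{lemma:corollary from p-lipschitzness}; I expect the only real work to be the bookkeeping of the $(\sqrt2)^{p-1}$ and $(\sqrt2)^{p}$ factors generated by the block structure, which have to be carried through the fractional exponent $(p+1)/p$ for everything to collapse into the announced closed form, while the hypotheses $M\ge\sqrt2\,pL_p$ and $p\ge2$ serve only to make $B$ strictly positive and to let $L_p$ be absorbed into $M$.
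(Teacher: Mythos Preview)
Your proposal is correct and follows essentially the same route as the paper: an upper bound on $\|\nabla g(\tilde z)\|_2$ via stationarity of $\Omega$ plus the Taylor remainder from Lemma~\ref{lemma:corollary from p-lipschitzness}, a lower bound on $g(x,\tilde y)-g(\tilde x,y)$ via the saddle inequalities for $\Omega$ plus the same remainder lemma, and elimination of $r$ through the power-mean inequality $r_x^{p+1}+r_y^{p+1}\ge 2^{(1-p)/2}r^{p+1}$. The only organizational differences are cosmetic: the paper treats the $x$- and $y$-blocks separately throughout and passes through the midpoint $\Omega(\tilde x,\tilde y)$ to derive \eqref{eq:objective residual by x lower bound} and \eqref{eq:objective residual by y lower bound} individually before summing, whereas you combine the blocks earlier and use $\Omega(\tilde x,y)\le\Omega(x,\tilde y)$ in one step; both routes yield the same inequality after the H\"older/power-mean step.
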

        \begin{proof}
            \[
                \|\nabla g(\tilde x, \tilde y)\|_2^2 = \|\nabla_x g(\tilde x, \tilde y)\|_2^2 + \|\nabla_y g(\tilde x, \tilde y)\|_2^2.
            \]
            
            Firstly, consider $\nabla_x$:
            \begin{gather*}
                \|\nabla_x g(\tilde x, \tilde y)\|_2^2 = \|\nabla_x g(\tilde x, \tilde y) - \nabla_x \Phi_{(x, y), p} (\tilde x, \tilde y) + \nabla_x \Phi_{(x, y), p} (\tilde x, \tilde y) - \\
                - \nabla_x \Omega_{(x, y), p, M} (\tilde x, \tilde y) + \nabla_x \Omega_{(x, y), p, M} (\tilde x, \tilde y)\|_2^2\\
                \le \Big( \|\nabla_x g(\tilde x, \tilde y) - \nabla_x \Phi_{(x, y), p} (\tilde x, \tilde y)\|_2 + \\
                + \|\nabla_x \Phi_{(x, y), p} (\tilde x, \tilde y) - \nabla_x \Omega_{(x, y), p, M} (\tilde x, \tilde y)\| + \|\nabla_x \Omega_{(x, y), p, M} (\tilde x, \tilde y)\|_2 \Big)^2 \\
                \le \Bigg( \frac{2^{\frac{p - 1}{2}} L_p}{p!} \|\tilde z - z\|_2^p + \frac{2^{\frac{p - 1}{2}} M}{p!} \|\tilde x - x\|^p_2 \Bigg)^2 \le 2^p M^2 \|\tilde z - z\|_2^{2p}.
            \end{gather*}
            For $\nabla_y$ in a similar way we get the same result
            \begin{equation*}
                \|\nabla_x g(\tilde x, \tilde y)\|_2^2 \le 2^{p} M^2 \|\tilde z - z\|_2^{2p}.
            \end{equation*}
            Summing these two results, we obtain
            \begin{equation}\label{eq:norm grad upper bound}
                \|\nabla g(\tilde x, \tilde y)\|_2^2 \le 2^{p + 1} M \big(\|\tilde x - x\|_2^2 + \|\tilde y - y\|_2^2 \big)^p.
            \end{equation}
            
            Secondly, consider point $(\tilde x, y)$. From \eqref{eq:corollary from p-lipschitzness} it is obvious that
            \[
                |g(\tilde x, y) - \Phi_{(x, y), p}(\tilde x, y)| \le \frac{L_p (\sqrt 2)^p}{(p + 1)!}\|(\tilde x, y) - (x, y)\|_2^{p + 1} = \frac{L_p (\sqrt 2)^p}{(p + 1)!}\|\tilde x - x\|_2^{p + 1}.
            \]
            From this fact we get
            \begin{gather*}
                g(\tilde x, y) \le \Phi_{(x, y), p}(\tilde x, y) + \frac{L_p (\sqrt 2)^{p}}{(p + 1)!} \|\tilde x - x\|_2^{p + 1} \\
                = \Phi_{(x, y), p}(\tilde x, y) + \frac{L_p (\sqrt 2)^{p - 1}}{(p + 1)!} \|\tilde x - x\|_2^{p + 1} - \\
                - \Bigg( \frac{M (\sqrt 2)^{p - 1}}{(p + 1)!} \|\tilde x - x\|_2^{p + 1} - \frac{L_p (\sqrt 2)^p}{(p + 1)!}\|\tilde x - x\|_2^{p + 1} \Bigg) \\
                = \Omega_{(x, y), p, M}(\tilde x, y) - (M - L_p \sqrt 2) \frac{(\sqrt 2)^{p - 1} \|\tilde x - x\|_2^{p + 1}}{(p + 1)!} \\
                \le \Omega_{(x, y), p, M}(\tilde x, \tilde y) - (M - L_p \sqrt 2) \frac{(\sqrt 2)^{p - 1} \|\tilde x - x\|_2^{p + 1}}{(p + 1)!}.
            \end{gather*}
            Since $M \ge \sqrt 2 p L_p \Leftrightarrow - L_p \sqrt 2 \ge -\frac{M}{p}$. we have
            \begin{equation}\label{eq:objective residual by x lower bound}
                \Omega_{(x, y), p, M}(\tilde x, \tilde y) - g(\tilde x, y) \ge \frac{M(p - 1) (\sqrt 2)^{p - 1} \|\tilde x - x\|_2^{p + 1}}{p(p + 1)!} \ge \frac{M \|\tilde x - x\|_2^{p + 1}}{p (p + 1)!}.
            \end{equation}
            
            Now consider the point $(x, \tilde y)$. In a similar way we can get the following result:
            \begin{equation}\label{eq:objective residual by y lower bound}
                g(x, \tilde y) - \Omega_{(x, y), p, M}(\tilde x, \tilde y) \ge \frac{M \|\tilde y - y\|_2^{p + 1}}{p (p + 1)!}.
            \end{equation}
            
            From the sum of \eqref{eq:objective residual by x lower bound} and \eqref{eq:objective residual by y lower bound} we obtain 
            \begin{equation}\label{eq:approximation dual gap lower bound}
                g(x, \tilde y) - g(\tilde x, y) \ge \frac{M}{p (p + 1)!}\Big(\|\tilde x - x\|_2^{p + 1} + \|\tilde y - y\|_2^{p + 1} \Big).
            \end{equation}
            
            Finally, we need to connect \eqref{eq:norm grad upper bound} and \eqref{eq:approximation dual gap lower bound}. From H\"older's inequality we can get 
            \[
                \left( \sum_{i=1}^n x_i^p \right)^{\frac{1}{p}} \le n^{\frac{q - p}{qp}} \left( \sum_{i = 1}^n x_i^q \right)^\frac{1}{q},
            \]
            where $q, p \in \mathbb N,\ q > p \ge 1$. Now, from \eqref{eq:norm grad upper bound} it follows that
            \[
                \left( \frac{\|\nabla g(\tilde x, \tilde y)\|_2^2}{2^{p + 1}M} \right)^\frac{1}{2p} \le \left(\|\tilde x - x\|_2^2 + \|\tilde y - y\|_2^2 \right)^\frac{1}{2}.
            \]
            And, from \eqref{eq:approximation dual gap lower bound} we can get 
            \[
                \left( \frac{p (p + 1)! (g(x, \tilde y) - g(\tilde x, y))}{M} \right)^\frac{1}{p + 1} \ge \left( \|\tilde x - x\|_2^{p + 1} + \|\tilde y - y\|_2^{p + 1} \right)^\frac{1}{p + 1}.
            \]
            Since $p \ge 2$, we obtain the final result
            \[
                \|\nabla g(\tilde x, \tilde y)\|_2^\frac{p + 1}{p} \frac{M^\frac{3p + 1}{2p}}{2^\frac{2p^2 + p + 1}{2p} p (p + 1)!} \le g(x, \tilde y) - g(\tilde x, y).
            \]
            \qed
        \end{proof}
        
        Now we have all the needed information to estimate the final convergence rate of the Algorithm \ref{alg:restarted highordermirrorprox with crn-spp gradient norm} for gradient norm minimization.
        
        \begin{algorithm}[t]
        	\caption{Restarted HighOrderMirrorProx with local quadratic convergence for gradient norm minimization}\label{alg:restarted highordermirrorprox with crn-spp gradient norm}
        	\begin{algorithmic}[1]
        		\STATE \textbf{Input} $z_1 \in \mathcal Z, p \ge 1, 0 < \e_\nabla < 1, R : R\ge \|z_1 - z^*\|_2$, $\rho \in (0, 1)$, $\alpha \in (0, 1)$.
        		
        		\STATE \textbf{Define}:
        		\begin{gather*}
        		\tilde z_1 = z_1, \quad M = \sqrt 2 p L_p, \quad \mu = \frac{\e}{4 R}, \quad \xi = \max \left\{ 1, \frac{4 R L_1}{\e_\nabla} \right\}, \\
        		\e' = \frac{M^\frac{3p + 1}{2p} \e_\nabla^\frac{p + 1}{p}}{2^\frac{2p^2 + 3p + 3}{2p} p (p + 1)!}, \\
        		g_\mu(x, y) = g(x, y) + \frac{\mu}{2}\big( \|x - x_1\|_2^2 - \|y - y_1\|_2^2 \big).
        		\end{gather*}
        		
                \FOR{$i \in [n]$, where $n = \left\lceil \log \frac{L_2 R \xi}{\mu} + 1 \right\rceil$}
                
                    \STATE Set $R_i = \frac{R}{2^{i - 1}}$
                
        		    \STATE Set $T_i = \left\lceil \bigg( \frac{64L_p R_i^{p - 1}}{p! \mu} \bigg)^\frac{2}{p + 1} \right\rceil$
        		    
        		    \STATE Run Algorithm \ref{alg:highordermirrorprox} for $g_\mu$ with $\tilde z_{i}$, $p$,  $T_i$ as input
        		    
        		    \STATE $\tilde z_{i + 1} = \bar z_{T_i}$
                
                \ENDFOR
                
            	\STATE Run Algorithm \ref{alg:crn-spp} with $\tilde z_{i + 1}$, $\e'$, $\bar \gamma = \frac{L_2 \mu^2}{2 L_1^2}$, $\rho$, $\alpha$, $g_\mu$ as input

        		\STATE \textbf{Find} $\tilde z = T_{p,\ M}^{g_\mu}(z_k)$

        		\STATE \textbf{Output} $\tilde z$.
        	\end{algorithmic}
        \end{algorithm}
        
        \begin{theorem}\label{th:gradient norm}
        	Assume the function $g(x, y): \mathbb R^n \times \mathbb R^m \to \R$ is convex by $x$ and concave by $y$, $p$ times differentiable on $\mathbb R^n$ with $L_p$-Lipschitz $p$-th derivative. Let $\tilde z$ be generated by Algorithm \ref{alg:restarted highordermirrorprox with crn-spp gradient norm}. Then
        	\[
        	    \|\nabla g(\tilde z)\|_2 \le \e_\nabla,
        	\]
        	and the total complexity of Algorithm~\ref{alg:restarted highordermirrorprox with crn-spp gradient norm} is 
            \[
                O \left( \left( \frac{L_p R^p}{\e_\nabla} \right)^\frac{2}{p + 1} \log \frac{L_2 R^2 \xi}{\e_\nabla} \right),
            \]
            where $\xi = \max \left\{ 1, \frac{4 R L_1}{\e} \right\}$.
        \end{theorem}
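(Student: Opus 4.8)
The plan is to reduce gradient-norm minimization for the merely convex--concave $g$ to the strongly-convex--strongly-concave setting already handled by Theorem~\ref{th:restarted highordermirrorprox with local quadratic conv}, by running Algorithm~\ref{alg:restarted highordermirrorprox with crn-spp} on the regularized objective
\[
    g_\mu(x,y) = g(x,y) + \tfrac{\mu}{2}\big(\|x-x_1\|_2^2 - \|y-y_1\|_2^2\big),\qquad \mu=\tfrac{\e_\nabla}{4R},
\]
and then performing one tensor step $\tilde z = T^{g_\mu}_{p,M}(z_k)$ with $M=\sqrt2\,p L_p$. First I would record the basic facts about $g_\mu$: it is $\mu$-strongly convex in $x$, $\mu$-strongly concave in $y$, its first-order Lipschitz constant is $L_1+\mu$ (which costs only a factor $2$ once $\e_\nabla\le 4RL_1$), and its second- and $p$-th-order Lipschitz constants coincide with those of $g$ (adding a quadratic does not change $\nabla^p$ for $p\ge 3$, and changes $\nabla^2$ only by a constant). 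Hence $g_\mu$ satisfies Assumptions~\ref{ass:strong convexity concavity}--\ref{ass:p-lipschitz} up to this harmless change, the unique saddle point $z_\mu^*$ of $g_\mu$ satisfies $F_\mu(z_\mu^*)=0$ with $F_\mu(z)=F(z)+\mu(z-z_1)$, and since $F(z^*)=0$ for a saddle $z^*$ of $g$ we have $F_\mu(z^*)=\mu(z^*-z_1)$. Strong monotonicity of $F_\mu$ then gives $\mu\|z_\mu^*-z^*\|_2^2\le\la -\mu(z^*-z_1),\,z_\mu^*-z^*\ra$, whence $\|z_\mu^*-z^*\|_2\le\|z^*-z_1\|_2\le R$ and therefore $\|z_\mu^*-z_1\|_2\le 2R$.

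The core argument has three parts. (i) Applying Theorem~\ref{th:restarted highordermirrorprox with local quadratic conv} to $g_\mu$, the restart loop and the CRN-SPP call of Algorithm~\ref{alg:restarted highordermirrorprox with crn-spp gradient norm} produce, in $O\!\big((L_pR^{p-1}/\mu)^{2/(p+1)}\log(L_2\xi R/\mu)+\log\log(1/\e')\big)$ oracle calls, a point $z_k$ with $m_{g_\mu}(z_k)=\tfrac12\|\nabla g_\mu(z_k)\|_2^2\le\e'$. (ii) The closing tensor step is controlled by the preceding lemma (Lemma~5.2 of \cite{grapiglia2019tensor}) applied to $g_\mu$, which yields
\[
    \|\nabla g_\mu(\tilde z)\|_2^{\frac{p+1}{p}}\cdot\frac{M^{\frac{3p+1}{2p}}}{2^{\frac{2p^2+p+1}{2p}}\,p\,(p+1)!}\;\le\;g_\mu(x_k,\tilde y)-g_\mu(\tilde x,y_k).
\]
(iii) The right-hand side is bounded above in terms of $\e'$: using that $(\tilde x,\tilde y)$ is the saddle point of the regularized Taylor model $\Omega_{z_k,p,M}$, the model-error estimate \eqref{eq:corollary from p-lipschitzness}, and the merit-function guarantee $m_{g_\mu}(z_k)\le\e'$, one shows that $g_\mu(x_k,\tilde y)-g_\mu(\tilde x,y_k)$ is at most the explicit $p$-dependent multiple of $\e'$ for which $\e'$ is calibrated in the ``Define'' block, hence $\|\nabla g_\mu(\tilde z)\|_2\le\e_\nabla/4$.

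It remains to pass back to $g$ and count. From $\nabla g(\tilde z)=\nabla g_\mu(\tilde z)-\mu(\tilde x-x_1,\,-(\tilde y-y_1))$ we get $\|\nabla g(\tilde z)\|_2\le\|\nabla g_\mu(\tilde z)\|_2+\mu\|\tilde z-z_1\|_2$; strong monotonicity of $F_\mu$ (and $F_\mu(z_\mu^*)=0$) gives $\|\tilde z-z_\mu^*\|_2\le\|\nabla g_\mu(\tilde z)\|_2/\mu$, so $\|\tilde z-z_1\|_2\le\|\nabla g_\mu(\tilde z)\|_2/\mu+2R$; plugging in $\mu=\e_\nabla/(4R)$ and $\|\nabla g_\mu(\tilde z)\|_2\le\e_\nabla/4$ yields $\|\nabla g(\tilde z)\|_2\le 2\|\nabla g_\mu(\tilde z)\|_2+\e_\nabla/2\le\e_\nabla$. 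Substituting $\mu=\e_\nabla/(4R)$ into the bound in (i) gives $(L_pR^{p-1}/\mu)^{2/(p+1)}=O\!\big((L_pR^{p}/\e_\nabla)^{2/(p+1)}\big)$ and $\log(L_2\xi R/\mu)=O\!\big(\log(L_2R^2\xi/\e_\nabla)\big)$, while the $\log\log$ term and the single final tensor step are lower order, so the total complexity is $O\!\big((L_pR^{p}/\e_\nabla)^{2/(p+1)}\log(L_2R^2\xi/\e_\nabla)\big)$. The main obstacle I anticipate is step (iii): deriving the upper bound on $g_\mu(x_k,\tilde y)-g_\mu(\tilde x,y_k)$ that exactly matches the calibration of $\e'$ needs a careful combination of the model-error bounds, the optimality conditions defining $T^{g_\mu}_{p,M}$, and the merit bound on $z_k$; and, throughout, the ``$\mu$ small'' simplifications ($L_1+\mu\le 2L_1$, the bound on $\|\tilde z-z_\mu^*\|_2$, the split $\e_\nabla=\tfrac{\e_\nabla}{4}+\tfrac{\e_\nabla}{4}+\tfrac{\e_\nabla}{2}$) must be tracked so the final inequality genuinely closes at $\e_\nabla$.
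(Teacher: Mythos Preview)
Your high-level strategy---regularize to $g_\mu$, invoke Theorem~\ref{th:restarted highordermirrorprox with local quadratic conv} on it, take one tensor step $T^{g_\mu}_{p,M}$, then undo the regularization---is exactly the paper's. The substantive divergence is your step~(iii). You plan to bound $g_\mu(x_k,\tilde y)-g_\mu(\tilde x,y_k)$ by combining the Taylor model-error estimate~\eqref{eq:corollary from p-lipschitzness}, the optimality conditions for $T^{g_\mu}_{p,M}$, and the merit bound on $z_k$; the paper instead uses the trivial inequality
\[
    g_\mu(x_k,\tilde y)-g_\mu(\tilde x,y_k)\;\le\;\max_{y'}g_\mu(x_k,y')-\min_{x'}g_\mu(x',y_k)\;=\;G_\mu(z_k)
\]
and then argues that the restart loop plus CRN-SPP already deliver $G_\mu(z_k)\le\e'$. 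This dissolves the ``main obstacle'' you flag: no second pass through the model errors is needed. (One should note that CRN-SPP literally returns $m_{g_\mu}(z_k)\le\e'$, not a duality-gap bound; closing that gap cleanly uses Proposition~\ref{prop:merit function and dual gap}, which costs a factor $L_1/\mu^2$ inside the CRN-SPP tolerance and is absorbed in the $\log\log$ term without affecting the stated complexity.)

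In the other direction, your treatment of $\|\tilde z-z_1\|_2$ via strong monotonicity of $F_\mu$, giving $\|\tilde z-z_\mu^*\|_2\le\|\nabla g_\mu(\tilde z)\|_2/\mu$, is more rigorous than the paper's, which simply asserts that $\tilde z$ is closer to the solution than $z_1$. Also be aware that the algorithm's $\e'$ is calibrated so that the lemma yields $\|\nabla g_\mu(\tilde z)\|_2\le\e_\nabla/2$, not $\e_\nabla/4$; with your triangle-inequality split $\|\nabla g(\tilde z)\|_2\le 2\|\nabla g_\mu(\tilde z)\|_2+\tfrac{\e_\nabla}{2}$ you would overshoot by a constant, so either tighten $\e'$ by a fixed factor or use the paper's $\sqrt{2a^2+2b^2}$ combination, which closes exactly at $\e_\nabla$.
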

        \begin{proof}
            Denote $z^*_\mu = (x^*_\mu, y^*_\mu)$ the saddle point of $g_\mu(z)$. First of all, since $g_\mu(x, y)$ is strongly-convex-strongly-concave function, we can apply restart technique to it every time the distance to its saddle point $\|z - z^*_\mu\|_2$ reduces twice. To check this, we consider upper estimate of the distance to the solution of regular function $R: R \ge \|z^* - z\|_2$ and show, that on each $i$-th restart $\|z^*_\mu - z_i\|_2 \le \|z^* - z_i\|_2 \le R_i$. We prove this by induction. 
            \begin{align*}
                g(x^*_\mu, y_1) + \frac{\mu}{2} \|x^*_\mu - x_1\|_2^2 = g_\mu(x^*_\mu, y_1) \le g_\mu(x^*, y_1) &= g(x^*, y_1) + \frac{\mu}{2} \|x^* - x_1\|_2^2 \\
                &\le g(x^*_\mu, y_1) + \frac{\mu}{2} \|x^* - x_1\|_2^2
            \end{align*}
            \[
                \Leftrightarrow \|x^*_\mu - x_1\|_2 \le \|x^* - x_1\|_2.
            \]
            \begin{align*}
                g(x_1, y^*_\mu) - \frac{\mu}{2} \|y^*_\mu - y_1\|_2 = g_\mu(x_1, y^*_\mu) \ge g_\mu(x_1, y^*) &= g(x_1, y^*) - \frac{\mu}{2} \|y^* - y_1\|_2^2 \\
                &\ge g(x_1, y^*_\mu) - \frac{\mu}{2} \|y^* - y_1\|_2^2
            \end{align*}
            \[
                \Leftrightarrow \|y^*_\mu - y_1\|_2 \le \|y^* - y_1\|_2.
            \]
            This gives us
            \[
                \|z^*_\mu - z_1\|_2 \le \|z^* - z_1\|_2 \le R.
            \]
            
            Now suppose, that $\|z^*_\mu - z_i\|_2 \le \|z^* - z_i\|_2 \le R_i = R / 2^{i - 1}$. Consider $i + 1$. From the proof of Theorem \ref{th:restarted highordermirrorprox conv} and our choice of $T_i$ in Algorithm \ref{alg:restarted highordermirrorprox with crn-spp gradient norm}, we know, that
            \begin{gather*}
                \mu \|z_{i + 1} - z^*_\mu\|_2^2 = \mu \|\bar z_{T_i} - z^*_\mu\|_2^2 \le \frac{16 L_p}{p!} \left( \frac{R_i^2}{2 T_i} \right)^\frac{p + 1}{2} \le \mu R_{i + 1}^2 \\
                \Leftrightarrow \|z_{i + 1} - z^*_\mu\|_2 \le R_{i + 1}.
            \end{gather*}
            
            From Theorem \ref{th:restarted highordermirrorprox with local quadratic conv} we already know the number of restarts to reach the area of quadratic convergence: $n = \left\lceil \log \frac{L_2 R \xi}{\mu} + 1 \right\rceil$.
            
            % As we already know from Theorem \ref{th:restarted highordermirrorprox with local quadratic conv}, the number of restarts $n$ of Algorithm \ref{alg:restarted highordermirrorprox} to reach the area of local quadratic convergence of Algorithm \ref{alg:crn-spp} is $n = \left\lceil \log \frac{L_2 R \xi}{\mu} + 1 \right\rceil$.
            
            % Then, we need to find the number of restarts $n$ of Algorithm \ref{alg:restarted highordermirrorprox} to reach the area of local quadratic convergence of Algorithm \ref{alg:crn-spp}: $\|\tilde z_n - z^*\|_2 \le \frac{\mu}{L_2 \xi}.$ We can choose such $n$, that 
            % \[
            %     \|\tilde z_n - z^*\|_2 \le R_n \le \frac{\mu}{L_2 \xi},
            % \]
            % where $\xi = \max \left\{1, \frac{L_1}{\mu} \right\} = \max \left\{ 1, \frac{4 R L_1}{\e} \right\}$. Therefore, the number of restarts is 
            % \[
            %     \frac{R}{2^{n - 1}} \le \frac{\mu}{L_2 \xi} \Leftrightarrow n = \left\lceil \log \frac{L_2 R \xi}{\mu} + 1 \right\rceil.
            % \]
            
            Next, we need to show, that Algorithm \ref{alg:restarted highordermirrorprox with crn-spp gradient norm} converges in terms of $\|\nabla g_\mu(z)\|_2$.
            Let $\tilde z = (\tilde x, \tilde y)$ be the output of Algorithm \ref{alg:restarted highordermirrorprox with crn-spp gradient norm}. From the definition of $g_\mu$ we get
            \begin{align*}
                \|\nabla g(\tilde x, \tilde y)\|_2^2 &= \|\nabla_x g_\mu(\tilde x, \tilde y) - \mu(\tilde x - x_1)\|_2^2 + \|\nabla_y g_\mu(\tilde x, \tilde y) + \mu(\tilde y - y_1)\|_2^2 \\
                &\le \left( \|\nabla_x g_\mu(\tilde x, \tilde y)\|_2 + \mu\|\tilde x - x\|_2 \right)^2 + \left( \|\nabla_y g_\mu(\tilde x, \tilde y)\|_2 + \mu\|\tilde y - y\|_2 \right)^2 \\
                &\le 2 \left( \|\nabla_x g_\mu(\tilde x, \tilde y)\|_2^2 + \|\nabla_y g_\mu(\tilde x, \tilde y)\|_2^2 \right) + 2 \mu^2 \left( \|\tilde x - x\|_2^2 + \|\tilde y - y\|_2^2 \right) \\
                &= 2\|\nabla g_\mu(\tilde x, \tilde y)\|_2^2 + 2\mu^2\|\tilde z - z_1\|_2^2
            \end{align*}
            \[
                \Leftrightarrow \|\nabla g(\tilde x, \tilde y)\|_2 \le \sqrt{2\|\nabla g_\mu(\tilde x, \tilde y)\|_2^2 + 2\mu^2\|\tilde z - z_1\|_2^2}.
            \]
            
            Firstly, we estimate $\|\nabla g_\mu(\tilde x, \tilde y)\|_2$. From \eqref{eq:gradient norm and objective residual} we know, that
            \begin{align*}
                \|\nabla g_\mu(\tilde x, \tilde y)\|_2^\frac{p + 1}{p} \frac{M^\frac{3p + 1}{2p}}{2^\frac{2p^2 + p + 1}{2p} p (p + 1)!} &\overset{\eqref{eq:gradient norm and objective residual}}{\le} g_\mu(x, \tilde y) - g_\mu(\tilde x, y) \\
                &\le \max_{\tilde y \in \R^m} g_\mu(x, \tilde y) - \min_{\tilde x \in \R^n} g_\mu(\tilde x, y) = G_\mu(x, y) \le \e'.
            \end{align*}
            \begin{equation}\label{eq:gradient norm upper bounded e/2}
                \Leftrightarrow \|\nabla g_\mu(\tilde x, \tilde y)\|_2 \le \left( \frac{2^\frac{2p^2 + p + 1}{2p} p (p + 1)! \e'}{M^\frac{3p + 1}{2p}} \right)^\frac{p}{p + 1} = \frac{\e_\nabla}{2}.
            \end{equation}
            
            Secondly, we estimate $\mu\|\tilde z - z_1\|_2$. By definition of $R$ we know, that
            \[
                \|z^* - z_1\|_2 \le R.
            \]
            And since $\tilde z$ is closer to solution then $z_1$, we have
            \[
                \|\tilde z - z^*\|_2 \le \|z^* - z_1\|2 \le R.
            \]
            From these facts and triangle inequality we get
            \begin{equation}\label{eq:initial distance to tensor step point upper bounded e/2}
                \mu \|\tilde z - z_1\|_2 \le \mu \left( \|\tilde z - z^*\|_2 + \|z^* - z_1\|_2 \right) \le 2 R \mu = \frac{\e_\nabla}{2}.
            \end{equation}
            
            Thus, from \eqref{eq:gradient norm upper bounded e/2} and \eqref{eq:initial distance to tensor step point upper bounded e/2} we obtain
            \[
                \|\nabla g_\mu(\tilde x, \tilde y)\|_2 \le \sqrt{2\e_\nabla^2 / 4 + 2 \e_\nabla^2 / 4} = \e_\nabla.
            \]
            
            Finally, we need to estimate complexity of the Algorithm \ref{alg:restarted highordermirrorprox with crn-spp gradient norm}.
            \begin{align*}
                N = \sum_{i = 1}^n T_i + k &\le \left( \frac{64 L_p}{p! \mu}  \right)^\frac{2}{p + 1} \sum_{i = 1}^n R_i^\frac{2(p - 1)}{p + 1} + n + k \\
                &\le  \left( \frac{64 L_p R^{p - 1}}{p! \mu} \right)^\frac{2}{p + 1} \cdot n + n + k \\
                &= O \left( \left( \frac{L_p R^p}{\e_\nabla} \right)^\frac{2}{p + 1} \log \frac{L_2 R^2 \xi}{\e_\nabla} \right),
            \end{align*}
            where $\xi = \max \left\{ 1, \frac{4 R L_1}{\e_\nabla} \right\}$.
            Here $k$ is the number of iterations of Algorithm \ref{alg:crn-spp} inside Algorithm \ref{alg:restarted highordermirrorprox with crn-spp gradient norm}. We dropped it due to its $\log \log$ dependence on $\e_\nabla$.
            
            \qed
        \end{proof}

\section{Discussion}\label{sec:discussion}

    In this work we propose three methods for $p$-th order tensor methods for strongly-convex-strongly-concave SPP. Two of these methods tackle classical minimax SPP \eqref{eq:saddle problem} and MVI \eqref{eq:mvi} problems, and the third method aims at gradient norm minimization of SPP \eqref{eq:saddle problem gradient norm}. 
    
    The methods for minimax problem are based on the ideas, developed in the works \cite{bullins2020higher} and \cite{huang2020cubic}. In \cite{bullins2020higher} the authors use $p$-th order oracle to construct an algorithm for MVI problems with monotone operator. As a corollary, this algorithm allows to solve SPP with convex-concave objective. Because of strong convexity and strong concavity of our problem, we can apply a restart technique to the method from \cite{bullins2020higher} and get better algorithm complexity. To further improve local convergence rate we switch to the algorithm from \cite{huang2020cubic} in the area of its quadratic convergence. This way we get rid of the multiplicative logarithmic factor and get additive $\log\log$ factor in the final complexity estimate and get locally quadratic convergence.
    
    The method for gradient norm minimization relies on the works \cite{grapiglia2019tensor} and \cite{dvurechensky2019near}. From \cite{grapiglia2019tensor} we take the result, that connects norm of the gradient of the objective with objective residual, and slightly modify it for SPP. This step allows us to use the framework from \cite{dvurechensky2019near} and use our optimal algorithm for minimax SPP for gradient norm minimization.
    
    In spite of all the improvements, we should remind about many additional assumptions about the problem, which reduces number of real problems, that can suit to it. 
    
    One of possible directions for further research are the more general H\"older conditios instead of Lipschitz conditions and uniformly convex case. Additionally, the author in \cite{bullins2020higher} provided implementation details of the Algorithm \ref{alg:highordermirrorprox} only for $p = 2$. Therefore, the questions about its realizaition for $p > 2$ are still opened.

% \begin{acknowledgements}
%     The work of P.~Ostroukhov was fulfilled in Sirius (Sochi) in August 2020.  The research  of  P.~Dvurechensky was partially supported by the Ministry of Science and Higher Education of the Russian Federation (Goszadaniye) 075-00337-20-03, project no. 0714-2020-0005. The research of A.~Gasnikov was partially  supported by RFBR, project number 19-31-51001. The work of P.~Ostroukhov was supported by Andrei M. Raigorodskii Scholarship in Optimization.
% \end{acknowledgements}

% Authors must disclose all relationships or interests that 
% could have direct or potential influence or impart bias on 
% the work: 
%
% \section*{Conflict of interest}
%
% The authors declare that they have no conflict of interest.

% BibTeX users please use one of
% \bibliographystyle{spbasic}      % basic style, author-year citations
\bibliographystyle{spmpsci}      % mathematics and physical sciences
\bibliography{bibliography.bib}   % name your BibTeX data base

\end{document}